\newcommand\NN{\mathbb{N}}
\theoremstyle{plain}
\newtheorem{theorem}{Theorem}[section]
\newtheorem{proposition}{Proposition}[section]
\newtheorem{lemma}{Lemma}[section]
\newtheorem{corollary}{Corollary}[section]
\theoremstyle{definition}
\newtheorem{definition}{Definition}[section]
\newtheorem{remark}{Remark}[section]
\newtheorem{example}{Example}[section]
\newtheorem{question}{Question}[section]
\newtheorem{claim}{Claim}
\newcommand{\Z}{\mathbb{Z}} 
\newcommand{\CC}{\mathbb{C}} 
\newcommand{\e}{\varepsilon}
\def\la{\lambda}
\def\al{\alpha}
\def\NN{{\Bbb N}}
\def\e{\varepsilon}
\def\la{\lambda}
\def\d{{\rm d}}
\title{Kreiss bounded and uniformly Kreiss bounded operators}
\author{ A. Bonilla and V. M\"uller
\thanks{The first  author was supported by MINECO and FEDER, Project MTM2016-75963-P. The second author was supported by grant no. 17-27844S of GA CR and RVO:67985840.}}
\begin{document}

\maketitle
\begin{abstract}
If $T$ is a Kreiss bounded operator on a Banach space, then $\|T^n\|=O(n)$.
Forty years ago Shields conjectured  that in  Hilbert spaces,  $\|T^n\| = O(\sqrt{n})$.
A negative answer to this conjecture was given by Spijker, Tracogna and Welfert in 2003.
We improve their result and show that  this conjecture is not true even for uniformly Kreiss bounded operators.
More precisely, for every $\varepsilon>0$ there exists a uniformly Kreiss bounded operator $T$ on a Hilbert space such that
 $\|T^n\|\sim (n+1)^{1-\varepsilon}$ for all $n\in \Bbb N$.
On the other hand, any Kreiss bounded operator on Hilbert spaces satisfies $\|T^n\|=O(\frac{n}{\sqrt{\log n}})$.

We also prove that the residual spectrum of a  Kreiss bounded operator on a reflexive
Banach  space is contained in the open  unit disc,  extending known results for power bounded operators.
As a consequence we obtain examples of mean ergodic Hilbert space operators which are not Kreiss bounded.

2010MSC: 47A10, 47A35

Key words and phrases: Kreiss boundedness, Ces\`aro mean, mean ergodic.
\end{abstract}

\section{Introduction}

Throughout this paper $X$ stands for a complex Banach space, the symbol $B(X)$  denotes the space of all bounded linear  operators acting on $X$, and $X^*$ is the dual of $X$.

\begin{definition}
For an operator $T\in B(X)$ we have three notions of Kreiss boundedness, ordered by strength:
\begin{enumerate}
\item $T$ is \emph{strong Kreiss bounded} if there exists $C>0$ such that
$$
\|(\lambda I-T)^{-k}\| \le \frac{ C}{(|\lambda|-1)^k} \;\; \mbox { for all }  k\in\NN
\mbox{ and } |\lambda |>1;$$
\item $T$ is \emph{uniformly  Kreiss bounded} if there exists $C>0$ such that
$$
\left\|\sum_{k=0} ^{n} \lambda^{-k-1} T^k\right\| \le \frac{ C}{|\lambda|-1} \;\; \mbox { for all } n\in\NN \mbox{ and }|\lambda |>1;
$$

\item $T$ is \emph{ Kreiss bounded } if there exists $C>0$ such that
$$
\|(\lambda I-T)^{-1}\| \le \frac{ C}{|\lambda|-1} \;\; \mbox { for all } |\lambda |>1.
$$
\end{enumerate}
\end{definition}

Given $T\in B(X)$ and $n\ge 0$, we  denote the \emph{Ces\`aro mean} by
$$
M_n(T):=\frac{1}{n+1}\sum_{k=0}^n T^k.
$$

We recall  some definitions concerning the behavior of the sequence of  Ces\`aro means $(M_n(T))$.

\begin{definition}
A linear operator $T$ on a Banach space $X$ is called

\begin{enumerate}
\item \emph{mean ergodic} if $M_n(T)$ converges in the strong operator topology of $X$;
\item \emph{Ces\`{a}ro bounded} if the sequence $(M_n(T))_{n\in \NN}$ is bounded;
\item \emph{absolutely Ces\`aro bounded} if there exists a constant $C > 0$ such that
$$
\frac{1}{N+1} \sum_{j=0}^N \|T^j x\| \leq C \|x\|\;,
$$
for all $x\in X$ and $N\in\NN$.

\end{enumerate}
\end{definition}

An operator $T$ is said to be \emph{power bounded} if    there is a $C>0$ such that $\|T^n\| <C$  for all  $ n$.

The first example of a mean ergodic operator which is not power-bounded was given by
Hille (\cite{Hille}, where $\|T^n\| \sim  n^{1/4}$). An
example of a mean ergodic operator $T$ on  $L^1(\Z)$ with $\limsup_n \|T^n\|/n > 0$ was obtained in
\cite{Kosek} (certainly, $\|T^n x\|/n \rightarrow 0$ for every $x\in L^1(\Z)$).

\begin{remark}
\begin{enumerate}
\item In \cite[Corollary 3.2]{MSZ}, it  is proved that an operator $T$ is  uniformly Kreiss bounded  if and only if there is a constant $C$ such that
\begin{equation}\label{UKB}
\|M_{n}(\lambda T)\| \le C \;\; \mbox{ for all }  n\in\NN \mbox{ and } |\lambda |=1.
\end{equation}

\item In \cite{GZ08}, it was shown that every strong Kreiss bounded operator is   uniformly Kreiss bounded. The converse is not true, see \cite[Section 5]{MSZ}.
Moreover, McCarthy  (see \cite{Mc}, \cite {Shi}) proved that if  $T$ is  strong  Kreiss bounded then $\|T^n\|\le Cn^{1/2}$ (see also \cite[Theorem 2.1]{LN}) and gave also an example of a strong Kreiss bounded operator which is not power bounded.

\item  Denote by
    $$
   M_n^{(2)}(T):= \frac{2}{(n+1)(n+2)}\sum _{j=0}^n (j+1)M_j(T)
   $$
	the second Ces\`aro mean.
It is easy to see that
$$
M_n^{(2)}(T)=\frac{2}{(n+1)(n+2)}\sum_{j=0}^n(n+1-j)T^j.
$$
In \cite {SW}, it was proved that $T$ is Kreiss bounded if and only if there is a constant  $C$ such that
\begin{equation}\label{KB2}
\|M_{n}^{(2)}(\lambda T)\| \le C \;\; \mbox{ for all }  n\in\NN \mbox{ and }|\lambda |=1.
\end{equation}
There exist Kreiss bounded operators which are not Ces\`{a}ro bounded, and conversely \cite{SZ}.

\item An operator $T$ is called M\"obius bounded if its spectrum is contained in the closed unit disc and $\varphi(T)$ is uniformly bounded on the set of the
 automorphism of the unit disc. By \cite{Shi},
$T$ is a M\"obius bounded operator if and only if it is Kreiss bounded.

\item On finite-dimensional spaces, the classes of  Kreiss bounded operators and power bounded operators coincide.

\item By (\ref{UKB}), any absolutely Ces\`{a}ro bounded operator is uniformly Kreiss bounded.
\end{enumerate}
\end{remark}

Let $X$  be the space of all bounded analytic functions $f$   on the unit disc in the complex plane such that the derivative $f'$ belongs to the Hardy space $H^1$, endowed with the norm
$$
\|f\| := \|f\|_{\infty} + \|f'\|_{H^1}\;.
$$
Then  the multiplication operator $M_z$ acting on $X$ is Kreiss bounded but it fails to be power bounded.
Moreover, this operator is not uniformly  Kreiss bounded (see \cite{SW}).

Let $V$ be the Volterra operator acting on $L^p[0,1]$, $1\le p\le \infty$ defined by
$$
(Vf)(t)=\int_0^t f(s) ds\qquad (f\in L^p(0,1)).
$$
Then $I-V$
is uniformly Kreiss bounded. For $p=2$ it is even power bounded (see  \cite{MSZ}).

It is immediate that any power bounded operator is absolutely Ces\`{a}ro bounded. In general, the converse  is not true.

Let $1\le p<\infty$ and let $e_n, n\in\NN$ be the standard basis in $\ell^p(\NN)$.
The following theorem yields examples of absolutely Ces\`{a}ro bounded operators with different behavior on $\ell^p (\NN)$.

\begin{theorem}\cite[Theorem 2.1]{BermBMP} \label{Theorem 2.1}
Let $T$ be the weighted backward shift on $\ell ^p(\NN)$ with $1\leq p<\infty$ defined by $Te_1:=0$ and $Te_k:=w_ke_{k-1}$ for $k>1$. If
 $w_k:=\displaystyle \left( \frac{k}{k-1}\right)^{\alpha}  $ with $0<\alpha <\frac{1}{p}$,
then $T$ is absolutely Ces\`{a}ro bounded on $\ell^p(\NN)$ and is not power bounded.
\end{theorem}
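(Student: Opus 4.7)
\medskip
\noindent\textbf{Proof plan.}
The plan is to compute $T^j e_k$ in closed form, read off non-power-boundedness directly from a single basis vector, and then prove absolute Ces\`aro boundedness by summing $\|T^jx\|^p$, bounding the inner sum in two cases.

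\medskip
\noindent\emph{Step 1: closed form and failure of power boundedness.} Since $Te_k = w_k e_{k-1}$ and the weights telescope,
$$
T^j e_k \;=\; w_k w_{k-1}\cdots w_{k-j+1}\, e_{k-j} \;=\; \left(\frac{k}{k-j}\right)^{\alpha} e_{k-j}\qquad (1\le j<k),
$$
and $T^j e_k=0$ for $j\ge k$. Taking $j=k-1$ gives $\|T^{k-1}e_k\| = k^{\alpha}\to\infty$, so $T$ is not power bounded.

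\medskip
\noindent\emph{Step 2: reduction via Jensen.} By the power-mean inequality
$$
\frac{1}{N+1}\sum_{j=0}^N \|T^jx\| \;\le\; \left(\frac{1}{N+1}\sum_{j=0}^N \|T^jx\|^p\right)^{1/p},
$$
so it suffices to prove
$\sum_{j=0}^N\|T^jx\|^p \le C^p (N+1)\|x\|^p$
for some $C$ independent of $N$ and $x\in \ell^p(\mathbb N)$.

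\medskip
\noindent\emph{Step 3: Fubini and the key inner sum.} For $x=\sum_k x_k e_k$, Step 1 gives
$\|T^jx\|^p = \sum_{k>j}|x_k|^p (k/(k-j))^{\alpha p}$, hence
$$
\sum_{j=0}^N\|T^jx\|^p \;=\; \sum_{k\ge 1}|x_k|^p\; S(k,N),\qquad S(k,N):=\sum_{j=0}^{\min(N,k-1)}\!\left(\frac{k}{k-j}\right)^{\alpha p}.
$$
It remains to show $S(k,N)\le C(N+1)$ uniformly in $k,N$; then the Fubini sum is at most $C(N+1)\|x\|^p$, and Step 2 concludes.

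\medskip
\noindent\emph{Step 4: bounding $S(k,N)$ by cases.} Reindex with $m=k-j$ to get $S(k,N)=k^{\alpha p}\sum_{m=\max(1,k-N)}^{k} m^{-\alpha p}$. The hypothesis $\alpha<1/p$ makes $\alpha p<1$, so $\sum_{m=1}^{k} m^{-\alpha p}\le C_1 k^{1-\alpha p}$.

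If $k\le 2(N+1)$, then $S(k,N)\le k^{\alpha p}\cdot C_1 k^{1-\alpha p}=C_1 k \le 2C_1(N+1)$. If $k>2(N+1)$, then every $m$ in the range satisfies $m\ge k-N\ge k/2$, whence $m^{-\alpha p}\le (k/2)^{\alpha p}\cdot k^{-\alpha p}$ gives $S(k,N)\le k^{\alpha p}\cdot (N+1)(k/2)^{-\alpha p} = 2^{\alpha p}(N+1)$. In both cases $S(k,N)\lesssim N+1$, which is the uniform bound needed.

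\medskip
\noindent\emph{Anticipated difficulty.} The only delicate point is Step~4: the two regimes $k\lesssim N$ and $k\gg N$ must be balanced, and the condition $\alpha p<1$ enters precisely to make $\sum m^{-\alpha p}$ grow like $k^{1-\alpha p}$ (cancelling the prefactor $k^{\alpha p}$) in the first regime; without that hypothesis the argument breaks and one would expect $T$ to cease being absolutely Ces\`aro bounded.
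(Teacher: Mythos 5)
Your argument is correct: the telescoping formula $T^je_k=(k/(k-j))^{\alpha}e_{k-j}$, the reduction via the power-mean inequality, and the two-case bound on $S(k,N)$ (using $\alpha p<1$ for $k\lesssim N$ and $k-N\ge k/2$ for $k>2(N+1)$) together give exactly the required estimate, and this is essentially the computation in the cited reference \cite{BermBMP}; the present paper only quotes the result without proof. One cosmetic slip: in Step~4 the intermediate inequality should read $m^{-\alpha p}\le (k/2)^{-\alpha p}=2^{\alpha p}k^{-\alpha p}$ (your displayed final bound $2^{\alpha p}(N+1)$ is the correct one).
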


For $p=2$, the adjoint of the operator in Theorem \ref{Theorem 2.1} is uniformly Kreiss bounded, mean ergodic but not absolutely Ces\`{a}ro  bounded.

In \cite{Kornfeld},  Kornfeld and Kosek  constructed for every $\delta \in (0,1)$
a positive mean ergodic operator $T$ on $L^1$ with $\|T^n\|\sim n^{1-\delta}$. By positivity, $T$ is absolutely
Ces\`{a}ro bounded.

Since
\begin{equation}\label{media}
T^n=(n+1)M_n(T)-nM_{n-1}(T),
\end{equation}
any Ces\`aro bounded operator satisfies that  $\| T^n\|=O(n)$.

In the following picture we summarize  the implications among the above definitions.
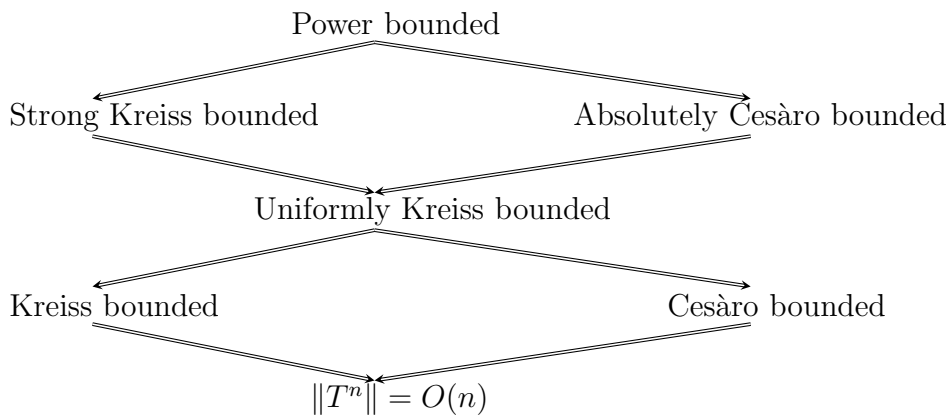
\begin{figure}[h]
\begin{tikzpicture}[scale=0.25,>=stealth]
  \node[right] at (25,15) {Power bounded};
  \node[right] at (10,10) {Strong Kreiss bounded};
  \node[right] at (40,10) {Absolutely Ces\`{a}ro bounded};
  \node[right] at (23,5) {Uniformly Kreiss bounded};
  \node[right] at (10,0) {Kreiss bounded};
  \node[right] at (45,0) {Ces\`{a}ro bounded};
  \node[right] at (26,-5) {\mbox {$\|T^n\|= O(n)$}};
  \draw[double, ->] (30,14) -- (15,11);
  \draw[double, ->] (30,14) -- (50,11);
  \draw[double, ->] (15,9) -- (30,6);
  \draw[double, ->] (50,9) -- (30,6);
   \draw[double, ->] (30,4) -- (15,1);
    \draw[double, ->] (30,4) -- (50,1);
   \draw[double, ->] (15,-1) -- (30,-4);
  \draw[double, ->] (50,-1) -- (30,-4);
 \end{tikzpicture}
\caption{Implications among different definitions related with
Kreiss bounded  and  Cesàro bounded operators on Banach spaces. }
\end{figure}

\section{About the Shields conjecture on Hilbert spaces}

If $T$ is a Kreiss bounded operator in a Banach space, then $\|T^n\|\le Cn$ \cite[ (2.4)]{LN}. By Nevanlinna \cite[Theorem 6]{Ne}, there are Kreiss bounded operators $T$ on Banach spaces with $\|T^n\|\ge C'n$ for some $C'>0$.

In \cite{Shi}, Shields conjectured that any Kreiss bounded Hilbert space operator $T$ satisfies $\|T^n\|=O(\sqrt{n})$.
A negative answer to this conjecture was given in \cite{STW} where it was shown that for every $\e>0$ there exists a Kreiss bounded Hilbert space operator $T$ such that the norms of its powers $\|T^n\|$ grow as fast as $n^{1-\e}$.

In this section we improve this result and construct an operator with similar properties,  which is even uniformly Kreiss bounded.

In the same paper \cite{Shi} Shields mentioned without proof that if $T$ is a Kreiss bounded Hilbert space operator such that the sequence of norms $(\|T^n\|)$ is increasing and there is a
unit vector $x$ such that $\|T^nx||\ge \|T^n\|/2$ for all $n$ then $\|T^n\|=O(\sqrt{n})$ (such properties would satisfy the first natural attempt to disprove the Shields conjecture).
For the sake of completeness we give a proof of this result. We need to prove the following lemma.

\begin{lemma} \label{propos1}
Let $(a_k)_{k=0}^\infty$ be an increasing sequence of non-negative numbers, $B>0$, and let
$\displaystyle\sum _{n=0}^{\infty}a_k^2r^{2k}\le B/(1-r)^2$ for  $0\le r < 1$. Then  $a_n = O(\sqrt n)$.
\end{lemma}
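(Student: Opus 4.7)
The plan is to exploit monotonicity of $(a_k)$ to bound $a_n^2$ from the tail of the series, then optimize in $r$.

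First, since $(a_k)$ is increasing, for every fixed $n$ and all $k\ge n$ we have $a_k\ge a_n$. Therefore
$$
a_n^2\sum_{k=n}^\infty r^{2k}\le \sum_{k=n}^\infty a_k^2 r^{2k}\le \sum_{k=0}^\infty a_k^2 r^{2k}\le \frac{B}{(1-r)^2}.
$$
Evaluating the geometric series on the left as $r^{2n}/(1-r^2)$ and rearranging gives
$$
a_n^2\le \frac{B(1-r^2)}{r^{2n}(1-r)^2}=\frac{B(1+r)}{r^{2n}(1-r)},\qquad 0\le r<1.
$$

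Second, I would substitute $r=1-1/n$ for $n\ge 2$. Then $1-r=1/n$ and $1+r\le 2$, while $r^{2n}=(1-1/n)^{2n}$ is bounded below by a positive constant (it is monotone in $n$ and converges to $e^{-2}$). Plugging in yields $a_n^2\le C\cdot n$ for some constant $C$ depending only on $B$, hence $a_n=O(\sqrt n)$.

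There is no real obstacle: the only thing to check is the uniform lower bound on $(1-1/n)^{2n}$, which is a standard calculus fact. The small cases $n=0,1$ are absorbed into the $O$-constant since $a_0,a_1$ are just numbers.
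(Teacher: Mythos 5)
Your proof is correct. It differs from the paper's only in how the key intermediate inequality is obtained: you bound the tail $\sum_{k\ge n}a_k^2r^{2k}\ge a_n^2\,r^{2n}/(1-r^2)$ directly from monotonicity and sum the geometric series, whereas the paper first multiplies the hypothesis by $1-r^2$ to pass to the telescoping series $a_0^2+\sum_{k\ge1}(a_k^2-a_{k-1}^2)r^{2k}$ and then bounds its head, using that the increments are non-negative. Both routes land on essentially the same estimate $r^{2n}a_n^2\le C B/(1-r)$ and both finish by taking $r\approx 1-1/n$ (the paper uses $r=e^{-1/n}$, you use $r=1-1/n$; either works, and your required lower bound $(1-1/n)^{2n}\ge 1/16$ for $n\ge2$ is indeed elementary). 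Your version is marginally more direct since it avoids the summation-by-parts step; the paper's version isolates the quantity $a_n^2$ as an exact telescoping partial sum, which is a slightly more structural way to see where monotonicity enters. There is no gap in your argument.
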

 \begin{proof}

Since $\displaystyle \sum _{n=0}^{\infty}a_k^2r^{2k} \le \frac{B}{(1-r)^2}$,  multiplying both sides by $1-r^2$, we see that
$$
a_0^2 +\sum_{k=1}^{\infty} (a_k^2-a_{k-1}^2) r^{2k} \le \frac{2B}{1-r}.
$$
Now since  $\{a_k\}_{ k\in \Bbb  N}$ is increasing, we have
$$
r^{2n}\Bigl(a_0^2 +\sum_{k=1}^{n} (a_k^2-a_{k-1}^2)\Bigr)  \le a_0^2 +\sum_{k=1}^{n} (a_k^2-a_{k-1}^2) r^{2k} \le \frac{2B}{1-r}.
$$
Set $r =e^{-1/n}$. We conclude that
$$
a_n^2=a_0^2 +\sum_{k=1}^n (a_k^2-a_{k-1}^2) \le B'n
$$
for some constant $B'$.
Thus
$$
a_n = O(\sqrt n).
 $$
\end{proof}

\begin{theorem}
Let $T$ be a Kreiss bounded operator on a  Hilbert space such that $\{\|T^n \|\}_{n=0}^{\infty}$ is increasing and suppose that
there exist a unit vector $x$ and a constant $A$ such that  $\|T^n\|\le A\|T^nx\|$ for all $n$. Then  $\|T^n\| = O(\sqrt n)$.
\end{theorem}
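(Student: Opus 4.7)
The plan is to set up an application of Lemma~\ref{propos1} to the increasing sequence $a_n := \|T^n\|$. So the whole task reduces to proving a bound of the form $\sum_{n=0}^\infty \|T^n\|^2 r^{2n} \le B/(1-r)^2$ for $0\le r<1$, and then invoking the lemma.

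The natural route from a resolvent estimate to an $\ell^2$ estimate on the Hilbert space orbit is via Parseval on the unit circle. For fixed $0<r<1$ and $\theta\in[0,2\pi)$, set
$$y(\theta)\;:=\;(I-re^{i\theta}T)^{-1}x\;=\;\sum_{n=0}^\infty r^n e^{in\theta}T^nx,$$
the Neumann series converging because Kreiss boundedness forces $\sigma(T)\subset\overline{\mathbb{D}}$ and $\|T^n\|=O(n)$. Writing $\lambda=e^{-i\theta}/r$ (so $|\lambda|>1$), the identity $(\lambda I-T)^{-1}=re^{i\theta}(I-re^{i\theta}T)^{-1}$ together with the Kreiss bound yields
$$\|y(\theta)\|\;\le\;\frac{1}{r}\,\|(\lambda I-T)^{-1}\|\;\le\;\frac{1}{r}\cdot\frac{C}{|\lambda|-1}\;=\;\frac{C}{1-r}.$$
Since the family $\{r^ne^{in\theta}T^nx\}_n$ is orthogonal in the Hilbert-space-valued $L^2(\mathbb{T};H)$, Parseval gives
$$\sum_{n=0}^\infty r^{2n}\|T^nx\|^2\;=\;\frac{1}{2\pi}\int_0^{2\pi}\|y(\theta)\|^2\,d\theta\;\le\;\frac{C^2}{(1-r)^2}.$$

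The second hypothesis $\|T^n\|\le A\|T^nx\|$ now transfers this estimate from the orbit of $x$ to the operator norms:
$$\sum_{n=0}^\infty r^{2n}\|T^n\|^2\;\le\;A^2\sum_{n=0}^\infty r^{2n}\|T^nx\|^2\;\le\;\frac{A^2C^2}{(1-r)^2}.$$
Since $(\|T^n\|)$ is increasing by assumption, Lemma~\ref{propos1} applies with $a_n=\|T^n\|$ and delivers $\|T^n\|=O(\sqrt n)$.

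The proof is essentially a two-line reduction once the right framework is in place; the only conceptual step is the passage from a pointwise resolvent estimate to a sum-of-squares estimate via Parseval on the circle, which is what makes the Hilbert-space hypothesis essential. The hypothesis that some unit vector $x$ achieves (up to the factor $A$) the operator norm of every power $T^n$ is exactly what is needed to convert an orbit estimate back into a norm estimate; without it, the same Parseval argument only controls individual orbits, not $\|T^n\|$ itself—which is, of course, precisely why the Shields conjecture is false in general.
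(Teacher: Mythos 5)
Your proposal is correct and follows essentially the same route as the paper: identify $\sum_n r^n e^{in\theta}T^n x$ with the resolvent applied to $x$, bound it by $C/(1-r)$ via the Kreiss condition, apply Parseval to get $\sum_n r^{2n}\|T^nx\|^2 \le C^2/(1-r)^2$, transfer to $\|T^n\|$ using the hypothesis on $x$, and invoke Lemma~\ref{propos1}. The only difference is that you spell out the change of variable $\lambda = e^{-i\theta}/r$ and the Neumann-series convergence, which the paper leaves implicit.
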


 \begin{proof}

Let $f(z)=\displaystyle \sum _{k=0}^{\infty} T^k z^k$. Since   $T$ is Kreiss bounded we have  $\|f(z)\| \le \frac{ C}{1-|z|}$ for all $|z|<1$.  If $y\in H$ with $\|y\|=1$ then
 $$
 \sum _{k=0}^{\infty}r^{2n}\|T^ny\|^2= \frac{1}{2\pi}\int_{0}^{2\pi}\|f(re^{i\theta})y\|^2 d\theta \le \frac{C^2}{(1-r)^2}.
 $$
Since there exists a unit vector $x$ and   a constant $A$ such that $\|T^n\|\le A\|T^nx\|$  for all $n$, we have
$$
 \sum _{k=0}^{\infty}r^{2n}\|T^n\|^2 \le A^2\sum _{k=0}^{\infty}r^{2n}\|T^nx\|^2 \le \frac{A^2 C^2}{(1-r)^2}.
 $$
Now by Lemma \ref{propos1}, we obtain  $\|T^n\| = O(\sqrt n)$.

 \end{proof}

The Shields conjecture  $\|T^n\|=O(n^{1/2})$ is true for some subclasses of Kreiss bounded operators:

\begin{enumerate}
\item If $T$ is a strong Kreiss bounded operator on a Banach space, then $\|T^n\|=O(n^{1/2})$, see \cite{Mc}.

\item If $T$ is an absolutely Ces\`{a}ro bounded operator on a Hilbert space, then $ \|T^n\|= o(n^{1/2})$ and moreover for all $\varepsilon$ there exist absolutely Ces\`{a}ro bounded  operators on $\ell^2(\mathbb N)$ such
that $ \|T^n\|= O(n^{1/2-\varepsilon})$ \cite{BermBMP}.

\item See \cite{Ghara} for other classes of  Kreiss bounded operators where the Shields conjecture is true.

\end{enumerate}

Now we construct a uniformly Kreiss bounded operator which disproves the Shields conjecture.

\begin{theorem}\label{propos2}
Let $0<\eta<1/2$. Then there exists a constant $c>0$ with the following property:
for each $N\in\NN$ there exists an operator $T_N$ acting on a $2N$-dimensional Hilbert space $H_N$ such that
$$
\|T_N^{2N-1}\|= N^{2\eta},
$$
$$
T_N\hbox{ is a weighted shift satisfying }\|T_N\|= 2^\eta,
$$
$$
\|M_n(T_N)\|\le c \mbox{ for every } n\in \Bbb N.
$$
\end{theorem}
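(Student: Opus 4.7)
The plan is to construct $T_N$ explicitly as a weighted shift on $H_N=\ell^2(\{1,\dots,2N\})$, so that $T_N e_k=w_k e_{k+1}$ for $1\le k\le 2N-1$ and $T_N e_{2N}=0$, with weights $w_k\in(0,2^\eta]$. The two norm conditions translate directly to $\max_k w_k=2^\eta$ and $\prod_{k=1}^{2N-1}w_k=N^{2\eta}$. The construction will distribute roughly $2\log_2 N$ weights equal to $2^\eta$ among weights chosen to keep the partial products controlled, with the free weights fine-tuned so that both the maximum and the product are attained exactly.

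For the Cesàro bound, introduce the partial products $P_k=w_1\cdots w_k$ (with $P_0=1$). A direct calculation yields, for $x=\sum_k c_k e_k$,
\[
\|M_n(T_N)x\|^2=\frac{1}{(n+1)^2}\sum_{i=1}^{2N} P_{i-1}^2\,\Bigl|\sum_{k=\max(1,i-n)}^{i}\frac{c_k}{P_{k-1}}\Bigr|^2.
\]
After the substitution $a_k=c_k/P_{k-1}$, the desired estimate $\|M_n(T_N)\|\le c$ becomes equivalent to a uniform bound for the windowed summation operator $(a_k)\mapsto\bigl(\sum_{k=i-n}^i a_k\bigr)_i$, regarded as a map on $\ell^2$ with the weight sequence $P_{k-1}^2$, with operator norm at most $c(n+1)$. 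This casts the problem as a weighted inequality of Muckenhoupt type.

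I plan to verify this weighted inequality by a Schur test with Schur weights $f_i=g_i=P_{i-1}^{-1}$, which reduces it to the uniform bound $\sum_{j=\max(1,i-n)}^{i}(P_{i-1}/P_{j-1})^2\le M(n+1)$ for all $i,n$. For $\eta<1/4$, placing the large weights nearly dyadically produces $P_k\sim k^{2\eta}$, for which this bound is routine. The main obstacle is the regime $\eta\in[1/4,1/2)$: here any monotone polynomial profile for $P_k$ saturates the admissible exponent in the Schur/Muckenhoupt estimate, so the construction must include some weights strictly smaller than $1$, producing a non-monotone sequence $(P_k)$ that attains the target value $N^{2\eta}$ only through a carefully arranged alternation of short amplification phases and intervening relaxation phases. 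The technical heart of the argument is to design these phases so that the resulting weight $(P_{k-1}^2)$ satisfies a Muckenhoupt-type $A_2$ condition with constant depending only on $\eta$, which then yields the uniform bound $\|M_n(T_N)\|\le c$.
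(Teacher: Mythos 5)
Your reduction is set up correctly: for a weighted shift with partial products $P_k=w_1\cdots w_k$ one indeed has $\|T_N^{2N-1}\|=P_{2N-1}$, $\|T_N\|=\max_k w_k$, and your identity for $\|M_n(T_N)x\|^2$ is exact, so the problem is equivalent to bounding the band kernel $K(i,k)=P_{i-1}/P_{k-1}$, $\max(1,i-n)\le k\le i$, by $c(n+1)$ on $\ell^2$. The gap is in the verification step, and it is not a technicality that a cleverer weight profile can repair. With your Schur weights $f_i=g_i=P_{i-1}^{-1}$ the column sums equal exactly $n+1$, so the test forces the row-sum condition $\sum_{j=\max(1,i-n)}^{i}(P_{i-1}/P_{j-1})^2\le M(n+1)$. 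For $\eta>1/4$ this condition is unsatisfiable for \emph{every} admissible choice of weights: take $i=2N$ and $n=2N-1$; the single summand $j=1$ equals $(P_{2N-1}/P_0)^2=N^{4\eta}$, which exceeds any fixed multiple of $n+1\le 2N$ once $4\eta>1$ (and fails by a logarithm at $\eta=1/4$). Since $P_0=1$ and $P_{2N-1}=N^{2\eta}$ are forced by the statement of the theorem, no alternation of amplification and relaxation phases, and no $A_2$-type rearrangement of the interior weights, can remove this term: the obstruction sits entirely at the two endpoints. The condition that is actually necessary (test $M_n(T_N)$ on basis vectors) is only $\sum_{k=0}^{n}(P_{j+k-1}/P_{j-1})^2\le c^2(n+1)^2$; your Schur/Cauchy--Schwarz step gives away a factor of $n+1$, and that loss is fatal precisely in the range $\eta\in(1/4,1/2)$ needed to refute the Shields conjecture. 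So as written your argument can only prove the theorem for $\eta<1/4$.

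For comparison, the paper avoids any absolute row-sum bound. It fixes the explicit profile $P_{j-1}=j^\eta$ for $j\le N$ and $P_{j-1}=N^{2\eta}(2N-j+1)^{-\eta}$ for $N<j\le 2N$, and estimates the bilinear form $\langle M_n(T_N)x,y\rangle$ blockwise. The two diagonal blocks are reduced (the second by reversing the index $j\mapsto 2N-j+1$) to the uniform Kreiss boundedness of the single infinite model shift $Vf_j=((j+1)/j)^\eta f_{j+1}$ on $\ell^2(\NN)$, a genuinely Hilbertian bilinear estimate valid for all $\eta<1/2$ and quoted from \cite{BermBMP}; the off-diagonal block, which is exactly where your row sums blow up, is handled separately by a direct Cauchy--Schwarz argument using that its kernel factorizes as $w_{j'}\cdot w_j^{-1}$ when $n\gtrsim N$, and that $w_{j'}/w_j\le 2$ on the relevant index range when $n\lesssim N$. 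To salvage your approach you would need to replace the Schur test on the full band by some such bilinear treatment of the cross block.
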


\begin{proof}
Let $H_N$ be the Hilbert space with an orthonormal basis $e_1,\dots,e_{2N}$. Let
$$
w_j=j^\eta\qquad(j=1,\dots,N)
$$
and
$$
w_j=\frac{N^{2\eta}}{(2N-j+1)^\eta}\qquad(j=N+1,\dots,2N).
$$

Consider  the weighted shift  $T_N$ on $H_N$ defined by
$$
T_Ne_j=\frac{w_{j+1}}{w_j} e_{j+1}\qquad(j=1,\dots,2N-1)
$$
and $T_Ne_{2N}=0$.

Note that
$w_1=1$, $w_N=w_{N+1}=N^{\eta}$ and $w_{2N}=N^{2\eta}$.
Then $\|T^{2N-1}\|=\|T^{2N-1}e_1\|=w_{2N}= N^{2\eta}$.

Clearly $\|T_N\|=\max\bigl\{\frac{w_{j+1}}{w_j}:1\le j\le 2N-1\bigr\}=2^\eta$.

Let $n\in\NN$.
We have
$$
\|M_n(T_N)\|=\sup\bigl\{\bigl|\langle M_n(T_N)x,y\rangle\bigr|: x,y\in H_N, \|x\|=\|y\|=1\bigr\}.
$$
Let $x=\sum_{j=1}^{2N} \al_j e_j$, $y=\sum_{j=1}^{2N}\beta_je_j$,
$\|x\|^2=\sum_{j=1}^{2N}|\al_j|^2=1$ and $\|y\|^2=\sum_{j=1}^{2N}|\beta_j|^2=1$.
Let $x=x_1+x_2$ where $x_1=\sum_{n=1}^{N}\al_je_j$ and
$x_2=\sum_{j=N+1}^{2N} \al_je_j$. Similarly, $y=y_1+y_2$, where $y_1=\sum_{j=1}^{N}\beta_je_j$ and
$y_2=\sum_{j=N+1}^{2N}\beta_je_j$.

We have
$$
|\langle M_n(T_N)x,y\rangle|\le A+B+C\eqno(1)
$$
where
$$
A=|\langle M_n(T_N)x_1,y_1\rangle|,
$$
$$
B=|\langle M_n(T_N)x_2,y_2\rangle|
$$
and
$$
C=|\langle M_n(T_N)x_1,y_2\rangle|.
$$

To estimate $A,B $ and $C$
we need two simple lemmas.

\medskip
\noindent{\bf Claim 1.}
There exists a constant $c_1$ such that
$$
\frac{1}{n+1}\sum_{j=1}^{\infty}\sum_{j\le j'\le j+n} \gamma_j\delta_{j'}\frac{j'^\eta}{j^\eta}\le c_1
$$
for all $n$, $\gamma_j,\delta_{j'}\ge 0\quad(j,j'\in\NN)$ with $\sum_j\gamma_j^2=\sum_{j'}\delta_{j'}^2=1$.
\smallskip

\noindent{\bf Proof.}
Let $H$ be the Hilbert space with an orthonormal basis $f_j\quad(j\in\NN)$.
Consider the weighted shift $V\in B(H)$ defined by $Vf_j=\Bigl(\frac{j+1}{
j}\Bigr)^\eta f_{j+1}$. Let
$u=\sum_{j=1}^\infty \gamma_jf_j$ and $v=\sum_{j=1}^\infty \delta_{j'}f_{j'}$ with $\|u\|=\|v\|=1$. By \cite[Corollary 2.4]{BermBMP}, $V$ is uniformly Kreiss bounded. So there exists a constant $c_1$ such that
$$
c_1\ge \|M_n(V)\|\ge
\bigl|\langle M_n(V)u,v\rangle\bigr|=
\frac{1}{n+1}\sum_{j=1}^{\infty}\sum_{j\le j'\le j+n} \gamma_j\delta_{j'}\frac{j'^\eta}{j^\eta}.
$$
\bigskip

\noindent{\bf Claim 2.}
There exists a constant $c_2>0$ such that
$$
\sum_{j=1}^M j^{-2\eta}\le c_2 M^{1-2\eta}
$$
for all $M$.
\smallskip

\noindent{\bf Proof.}
We have
$$
\sum_{j=1}^M j^{-2\eta}\le
1+\int_1^{M}t^{-2\eta} dt=
1+\Bigl[\frac{t^{1-2\eta}}{1-2\eta}\Bigr]_1^M \le
c_2 M^{1-2\eta}
$$
for some constant $c_2>0$ independent of $M$.

\bigskip

\noindent{\bf Continuation of  the proof of Theorem \ref{propos2}:}

We have
$$
A=
\frac{1}{n+1}\sum_{j\le j'\le j+n\atop j'\le N} \al_j\bar\beta_{j'}\frac{w_{j'}}{w_j}
\le
\frac{1}{n+1}\sum_{j\le j'\le j+n\atop j'\le N} |\al_j|\cdot|\beta_{j'}|\frac{{j'^\eta}}{j^\eta}
\le c_1\eqno(2)
$$
by Claim 1.

\bigskip
Similarly,
\begin{align*}
B&=
\frac{1}{n+1}\sum_{N<j\le j'\le j+n, \atop j' \le 2N} \al_j\bar\beta_{j'}\frac{w_{j'}}{w_j}
\le
\frac{1}{n+1}\sum_{N<j\le j'\le j+n, \atop j' \le 2N}|\al_j|\cdot|\beta_{j'}|
\Bigl(\frac{2N-j+1}{2N-j'+1}\Bigr)^\eta
\cr
&\le
\frac{1}{n+1}
\sum_{1\le s'\le s\le \min\{s'+n, N\}} |\al_{2N-s+1}|\cdot|\beta_{2N-s'+1}|\Bigl(\frac{s}{s'}\Bigr)^\eta.
\end{align*}
Setting $\gamma_s=|\alpha_{2N-s+1}|$, $\delta_{s'}=|\beta_{2N-s'+1}|$ in Claim 1, we get
$$
B\le c_1\|x_2\|\cdot\|y_2\|\le c_1.\eqno(3)
$$
\bigskip

To estimate $C$, we distinguish two cases:
\medskip

Let $n+1\ge \frac{N}{2}$. Then
\begin{equation*}\tag{4}
\begin{aligned}
C&=
\frac{1}{n+1}\sum_{j\le j'\le j+n\atop j\le N<j'} \al_j\bar\beta_{j'}\frac{w_{j'}}{w_j}
\le
\frac{2}{N}\Bigl(\sum_{j=1}^{N}\frac{|\al_j|}{w_j}\Bigr)\cdot\Bigl(\sum_{j'>N}|\beta_{j'}|w_{j'}\Bigr)
\cr
&\le
\frac{2}{N}\Bigl(\sum_{j=1}^{N}|\al_j|^2\bigr)^{1/2}\cdot
\Bigl(\sum_{j=1}^{N}w_j^{-2}\bigr)^{1/2}\cdot
\Bigl(\sum_{j'>N}|\beta_{j'}|^2\Bigr)^{1/2}\cdot \Bigl(\sum_{j'>N}w_{j'}^2\Bigr)^{1/2}
\cr
&\le
\frac{2}{N}\|x_1\|\cdot \Bigl(\sum_{j=1}^{N}j^{-2\eta}\Bigr)^{1/2}\cdot\|y_2\|
\cdot N^{2\eta}\cdot
\Bigl(\sum_{s=1}^{N} s^{-2\eta}\Bigr)^{1/2}
\cr
&\le
\frac{2c_2}{N}N^{2\eta}
N^{1-2\eta}= 2c_2.
\end{aligned}
\end{equation*}
\medskip

Let $n+1<\frac{N}{2}$. Then
$$
C=
\frac{1}{n+1}\sum_{j\le j'\le j+n\atop j\le N<j'} \al_j\bar\beta_{j'}\frac{w_{j'}}{w_j}
\le
\frac{2}{n+1} \Bigl(\sum_{j=N-n+1}^{N}|\al_j|\Bigr)\cdot
\Bigl(\sum_{j'=N+1}^{N+n}|\beta_{j'}|\Bigr)
$$
since
\begin{align*}
&\max\Bigl\{\frac{w_{j'}}{w_j}: j\le j'\le j+n, j\le N<j'\Bigr\}
\cr
=&
\frac{w_{N+n}}
{w_{N-n+1}}=\frac{N^{2\eta}}{(N-n+1)^{\eta}(N-n+1)^\eta}
\le
\frac{N^{2\eta}}{(N/2)^{2\eta}}=2^{2\eta}\le 2.
\end{align*}
So
$$
C\le \frac{2}{n+1}\|x_1\|\cdot\sqrt{n}\cdot\|y_2\|\cdot\sqrt{n}\le 2.\eqno(5)
$$

Hence by (1), (2), (3),(4) and (5) we have $\|M_n(T_N)\|\le c$ for all $n$, where $c=2c_1+\max\{2,2c_2\}$.
\end{proof}

\begin{theorem}
Let $\e>0$. Then there exists a uniformly Kreiss bounded operator $T$ on a Hilbert space such that
$\|T^k\|\ge \frac{1}{3}(k+1)^{1-\e}$ for all $k\in\Bbb N$.
\end{theorem}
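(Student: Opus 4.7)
The plan is to assemble $T$ as an orthogonal direct sum of the finite-dimensional models from Theorem~\ref{propos2}. Given $\varepsilon>0$, set $\eta:=(1-\varepsilon)/2\in(0,1/2)$ and, for each $N\in\N$, let $T_N\in B(H_N)$ be the weighted shift of Theorem~\ref{propos2} for this value of $\eta$. Put
$$
H:=\bigoplus_{N=1}^\infty H_N,\qquad T:=\bigoplus_{N=1}^\infty T_N.
$$
Since $\|T_N\|=2^\eta$ for every $N$, the sum is well defined, $\|T\|=2^\eta<\infty$, and $\|T^k\|=\sup_N\|T_N^k\|$ for each $k$.

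For uniform Kreiss boundedness, I would use the characterisation (\ref{UKB}): it suffices to show $\sup_N\|M_n(\lambda T_N)\|\le c$ uniformly in $n$ and in $|\lambda|=1$. This is the one point where one has to go slightly beyond what Theorem~\ref{propos2} literally states (which is the case $\lambda=1$), and it is the main obstacle I anticipate. The clean way out is to observe that the diagonal unitary $D_\lambda\in B(H_N)$ defined by $D_\lambda e_j:=\lambda^{j-1}e_j$ satisfies $D_\lambda^{-1}(\lambda T_N)D_\lambda=T_N$, so $\lambda T_N$ is unitarily equivalent to $T_N$. Consequently $\|M_n(\lambda T_N)\|=\|M_n(T_N)\|\le c$, and (\ref{UKB}) yields uniform Kreiss boundedness of $T$.

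For the lower bound on $\|T^k\|$, I would pick, for each $k$, the summand whose $k$-th power sits at the top of its shift. Since $T_N$ is the weighted shift with $T_N^k e_j=(w_{j+k}/w_j)e_{j+k}$ whenever $j+k\le 2N$, I would take $N=(k+1)/2$ when $k$ is odd and $N=k/2+1$ when $k$ is even. In the odd case this gives
$$
\|T^k\|\ge\|T_N^{2N-1}\|=N^{2\eta}=\bigl((k+1)/2\bigr)^{1-\varepsilon}\ge (k+1)^{1-\varepsilon}/2,
$$
already exceeding $(k+1)^{1-\varepsilon}/3$. In the even case, $\|T^k\|\ge w_{2N-1}=N^{2\eta}/2^\eta=\bigl((k/2)+1\bigr)^{1-\varepsilon}/2^{(1-\varepsilon)/2}$.

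What remains is a short numerical verification that these lower bounds exceed $(k+1)^{1-\varepsilon}/3$ for every $k\ge 1$; in the odd case this is immediate, while in the even case it reduces to the inequality $3\ge 2^{3(1-\varepsilon)/2}$, which holds for every $\varepsilon>0$ since $2^{3/2}<3$. The case $k=0$ is trivial since $\|T^0\|=1\ge 1/3$.
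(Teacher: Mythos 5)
Your proposal is correct and follows essentially the same route as the paper: there too $T=\bigoplus_N T_N$, uniform Kreiss boundedness is deduced from $\|M_n(T)\|\le c$ plus the unitary equivalence of $\la T$ with $T$ (the paper cites Shields's weighted-shift result where you write the diagonal unitary explicitly), and the lower bound is obtained from the summand $T_N$ with $2N-1=k$ for odd $k$ and from $T_{N+1}$ for $k=2N$. The only difference worth noting is that the paper picks $\eta$ strictly inside $\bigl(\frac{1-\e}{2},\frac{1}{2}\bigr)$, which also covers the degenerate case $\e\ge 1$ where your choice $\eta=(1-\e)/2$ falls outside $(0,\frac{1}{2})$; for that you would simply observe that the statement for $\e\ge 1$ follows from the statement for any smaller $\e$.
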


\begin{proof}
Choose $\eta\in \Bigl(\frac{1-\e}{2},\frac{1}{2}\Bigr)$.
Let
$$
T=\bigoplus_{N=1}^\infty T_N,
$$
where $T_N\quad(N\in\NN)$ are the operators constructed in Theorem \ref{propos2}.

Clearly $T$ is a bounded linear operator, $\|T\|=2^\eta<\sqrt{2}$.

For each $n\in\NN$ we have
$$
\|M_n(T)\|=\sup_N \|M_n(T_N)\|\le c.
$$

Since $T$ is a weighted shift, $\la T$ is unitarily equivalent to $T$ for each $\la\in\CC$, $|\la|=1$ \cite[Corollary 2]{Shi74}. Hence $T$ is uniformly Kreiss bounded.

For each $N\in\NN$ we have
$$
\|T^{2N-1}\|\ge \|T_N^{2N-1}\|=N^{2\eta}>N^{1-\e}\ge \frac{1}{3}(2N)^{1-\e}
$$
and
$$
\|T^{2N}\|\ge\|T_{N+1}^{2N}\|\ge
\frac{\|T_{N+1}^{2N+1}\|}{\|T_{N+1}\|}\ge
\frac{(N+1)^{2\eta}}{2^\eta}>\frac{(N+1)^{1-\e}}{\sqrt{2}}>\frac{1}{3}(2N+1)^{1-\e}.
$$
Hence
$\|T^k\|\ge \frac{1}{3}(k+1)^{1-\e}$ for all $k\in\Bbb N$.

\end{proof}

On the other hand, we prove a small improvement of the  general estimate of norms of powers of Kreiss bounded operators on Hilbert spaces. The proof follows the argument of \cite[Theorem 2.3]{BermBMP} for uniformly Kreiss bounded operators with some necessary  modifications.

\begin{theorem}\label{Hilbert}
Let $T$ be a Kreiss bounded operator in a Hilbert space. Then $\displaystyle \|T^n\|=O\Bigl(\frac{n}{\sqrt{\log n}}\Bigr)$.
\end{theorem}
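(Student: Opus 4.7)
The plan is to mirror the argument of \cite[Theorem 2.3]{BermBMP} for uniformly Kreiss bounded operators, with the modifications needed to accommodate the merely Kreiss hypothesis.

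First, I would transport the Kreiss condition into a Hilbert-space Parseval inequality. For $x\in H$ and $|z|<1$, the $H$-valued analytic function $f_x(z):=(I-zT)^{-1}x=\sum_{k\ge 0}T^k x\,z^k$ satisfies $\|f_x(z)\|\le C\|x\|/(1-|z|)$, obtained from the Kreiss estimate applied at $\lambda=1/z$. Squaring, integrating in $\theta$ over the circle $|z|=r$, and invoking Parseval yields
$$\sum_{k=0}^\infty \|T^k x\|^2\, r^{2k} \;\le\; \frac{C^2\|x\|^2}{(1-r)^2}, \qquad 0\le r<1.$$

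Second, as in the BermBMP argument, I would feed in the standard Banach-space consequence $\|T^m\|\le K m$ of the Kreiss hypothesis together with submultiplicativity. Choosing a unit vector $x$ with $\|T^n x\|\ge \|T^n\|/2$ and using $\|T^n x\|\le \|T^{n-k}\|\cdot\|T^k x\|$ gives the one-sided comparison
$$\|T^k x\|\;\ge\;\frac{\|T^n\|}{2K(n-k)},\qquad 1\le k\le n-1.$$

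The crucial third step is to combine these two ingredients to extract a logarithmic gain. A single application of the Parseval inequality at $r=1-1/n$, after substituting the lower bound, recovers only $\|T^n\|=O(n)$ because the resulting sum $\sum_{m\ge 1}1/m^2$ is finite. The Hilbert improvement comes from exploiting the Parseval inequality at a whole logarithmic family of scales simultaneously: integrating it against a suitable weight in $r$ over a range covering $\Theta(\log n)$ dyadic scales (e.g.\ $r_j=1-1/j$ for $j$ ranging from $n$ up to some $j_{\max}$), the left-hand side picks up an extra factor of $\log n$ coming from a harmonic tail, while the right-hand side can be kept of order $n^2$. One then concludes $\|T^n\|^2\log n\le C' n^2$, and hence $\|T^n\|=O(n/\sqrt{\log n})$.

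The main technical obstacle is calibrating this averaging so that the logarithmic gain is genuine and is not cancelled by a matching factor on the right-hand side. This is the \emph{necessary modification} of the BermBMP argument: under the uniformly Kreiss hypothesis one has the sharper truncated-sum bound $\|\sum_{k=0}^m\lambda^k T^k\|\le C(m+1)$, which bypasses this integration over scales entirely; under the merely Kreiss hypothesis one has to work with the resolvent scale by scale, and it is precisely this that costs the additional $\sqrt{\log n}$ in the conclusion compared with what the BermBMP method yields under the uniformly Kreiss assumption.
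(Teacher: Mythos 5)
Your first step (the Parseval transfer of the Kreiss resolvent bound to $\sum_k\|T^kx\|^2r^{2k}\le C^2\|x\|^2/(1-r)^2$, or equivalently the paper's Claim 1, $\sum_{j=0}^{N-1}\|T^jx\|^2\le 16C^2N^2$) is correct and is indeed the starting point of the paper's proof. But the mechanism you propose for the logarithmic gain does not work, and this is exactly where the real content of the theorem lies. The lower bound $\|T^kx\|\ge\|T^nx\|/(2K(n-k))$ coming from submultiplicativity is too weak: it is consistent with $\sum_{k<n}\|T^kx\|^2=O(\|T^nx\|^2)$ (since $\sum_m m^{-2}$ converges, as you note), and in that situation the Parseval inequality -- at any scale $r$, or averaged with any weight over any family of scales -- yields nothing better than $\|T^nx\|^2\le C^2/(1-r)^2$, i.e.\ $O(n)$. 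Concretely, for each $r_j=1-1/j$ with $j\ge n$ the inequality has right-hand side at least $C^2n^2$ while its left-hand side is bounded by an absolute constant times the single-scale left-hand side, so no weighted combination of these inequalities can manufacture an extra $\log n$; the ``harmonic tail'' you invoke is not there to be harvested from this input.

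The missing idea is duality: one must use that $T^*$ is Kreiss bounded with the same constant. Applying the Parseval bound to $T^*$ at the vector $y=T^Nx$ and pairing $T^{*j}y$ against the unit vectors $T^{N-j}x/\|T^{N-j}x\|$ gives
$$
\sum_{j=0}^{M-1}\frac{\|T^Nx\|^2}{\|T^{N-j}x\|^2}\le 16C^2M^2 ,
$$
which is the paper's Claim 2. By Cauchy--Schwarz this reverses, on each dyadic block $2^k<j\le 2^{k+1}$, to
$$
\sum_{j=2^k+1}^{2^{k+1}}\frac{\|T^{N-j}x\|^2}{\|T^Nx\|^2}\ \ge\ \frac{c}{C^2},
$$
i.e.\ on average $\|T^{N-j}x\|\gtrsim\|T^Nx\|/\sqrt{j}$ rather than the $\|T^Nx\|/j$ your submultiplicativity bound provides; this square-root improvement is precisely what turns a convergent sum into one that grows like the number of dyadic blocks. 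Summing over $K\sim\log_2N$ blocks gives $\sum_{j<N}\|T^jx\|^2\ge cK\,C^{-2}\|T^Nx\|^2$, and comparing with Claim 1 yields $\|T^Nx\|^2\le C'N^2/\log N$. (The paper runs this comparison through an auxiliary vector $y_\la=\sum_j(\la T)^jx/\|T^jx\|$ and the second Ces\`aro means, but the dyadic-block lower bound built on the adjoint is the engine.) Without some substitute for this duality step your outline cannot close.
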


\begin{proof}
In  \cite {SW} it was proved that $T$ is Kreiss bounded if and only if there is a constant $C'>0$ such that
$$
\|M_{n}^{(2)}(\lambda T)\| \le C' \;\; \mbox{ for }   n=0,1,2, \cdots  \mbox{ and }
|\lambda |=1.
$$
Thus there exists a constant $C>0$ such that
$$
\Bigr\|\sum_{j=0}^{N-1}(N-j)(\lambda T)^j\Bigr\|\le CN^2
$$
for all $\la, |\la|=1$ and $N\ge 1$.

We need several claims.

\begin{claim}
Let $x\in H$, $\|x\|=1$. Then
$$
\sum_{j=0}^{N-1}\|T^jx\|^2\le 16C^2N^2
$$
for all $N\ge 1$.
\end{claim}

\begin{proof}
Consider the normalized Lebesgue measure on the unit circle. We have
\begin{align*}
&N^2\sum_{j=0}^{N-1}\|T^jx\|^2\le
\sum_{j=0}^{2N-1}(2N-j)^2\|T^jx\|^2
\cr
=&\int_{|\la|=1} \Bigl\|\sum_{j=0}^{2N-1} (2N-j)(\la T)^jx\Bigr\|^2 d\lambda \le
16C^2N^4.
\end{align*}
So $\sum_{j=0}^{N-1}\|T^jx\|^2\le 16C^2N^2$.
\end{proof}

\begin{claim}\label{lemma2}
Let $0<M<N$ and $x\in H$, $\|x\|=1$, $T^Nx\ne 0$. Then
$$
\sum_{j=0}^{M-1}\frac{\|T^Nx\|^2}{\|T^{N-j}x\|^2}\le 16C^2M^2.
$$
\end{claim}

\begin{proof}
Set $y=T^Nx$. Since $T^*$ is also Kreiss bounded with the same constant, we have
\begin{align*}
&16C^2M^2\|y\|^2\ge
\sum_{j=0}^{M-1}\|T^{*j}y\|^2
\ge
\sum_{j=0}^{M-1}\Bigl|\Bigl\langle T^{*j}y,\frac{T^{N-j}x}{\|T^{N-j}x\|}\Bigr\rangle\Bigr|^2
\cr
=
&\sum_{j=0}^{M-1}\Bigl|\Bigl\langle y, \frac{T^Nx}{\|T^{N-j}x\|}\Bigr\rangle\Bigr|^2
=\|y\|^2 \sum_{j=0}^{M-1}\frac{\|T^Nx\|^2}{\|T^{N-j}x\|^2}.
\end{align*}
Hence
$$
\sum_{j=0}^{M-1}\frac{\|T^Nx\|^2}{\|T^{N-j}x\|^2}\le 16C^2M^2.
$$
\end{proof}

\begin{claim}\label{lemma3}
Let $N\in\NN$, $x\in H$, $\|x\|=1$ and $T^Nx\ne 0$. Then
$$
\sum_{j=0}^{N-1}\frac{1}{\|T^jx\|}\ge \frac{\sqrt{N}}{4C}.
$$
\end{claim}

\begin{proof}
We have
$$
\sum_{j=1}^{N-1}\|T^jx\|\le\Bigl(\sum_{j=0}^{N-1} \|T^jx\|^2\Bigr)^{1/2}\cdot\sqrt {N}\le
4CN^{3/2}.
$$
Thus
$$
N=\sum_{j=0}^{N-1}\frac{\sqrt{\|T^jx\|}}{\sqrt{\|T^jx\|}}\le
\Bigl(\sum_{j=0}^{N-1}\|T^jx\|\Bigr)^{1/2}\cdot \Bigl(\sum_{j=0}^{N-1}\frac{1}{\|T^jx\|}\Bigr)^{1/2}
$$
and
$$
\sum_{j=0}^{N-1}\frac{1}{\|T^jx\|}\ge
\frac{N^2}{\sum_{j=0}^{N-1}\|T^jx\|}\ge
\frac{N^2}{4CN^{3/2}}=\frac{\sqrt{N}}{4C}.
$$
\end{proof}

\begin{claim}\label{lemma4}
Let $0<M_1<M_2<N$, $\|x\|=1$ and $T^Nx\ne 0$. Then
$$
\sum_{j=M_1}^{M_2-1}\frac{\|T^{N-j}x\|^2}{\|T^Nx\|^2}\ge
\frac{(M_2-M_1)^2}{16C^2M_2^2}.
$$
\end{claim}

\begin{proof}
Let $a_j=\frac{\|T^{N-j}x\|^2}{\|T^Nx\|^2}$. By Claim \ref{lemma2},
$$
\sum_{j=M_1}^{M_2-1}\frac{1}{a_j}\le
\sum_{j=0}^{M_2-1}\frac{1}{a_j}\le 16C^2M_2^2.
$$
We have
$$
M_2-M_1=
\sum_{j=M_1}^{M_2-1}\frac{\sqrt{a_j}}{\sqrt{a_j}}\le
\Bigl(\sum_{j=0}^{N-1}a_j\Bigr)^{1/2}\cdot \Bigl(\sum_{j=0}^{N-1}\frac{1}{a_j}\Bigr)^{1/2}
$$
and
$$
\sum_{j=M_1}^{M_2-1}a_j\ge
(M_2-M_1)^2\cdot \Bigl(\sum_{j=0}^{N-1}\frac{1}{a_j}\Bigr)^{-1}\ge
\frac{(M_2-M_1)^2}{16C^2M_2^2}.
$$
\end{proof}

\medskip

\noindent{\bf Continuation of the proof of Theorem \ref{Hilbert}.}

Let $K\in\NN$ and $2^{K+1}<N\le 2^{K+2}$.
Let $x\in H$, $\|x\|=1$ and $T^Nx\ne 0$.

For $|\la|=1$ let $y_\la=\sum_{j=0}^{2N-1}\frac{(\la T)^jx}{\|T^jx\|}$. Then
$$
\int_{|\la|=1}\|y_\la\|^2 \d\la=2N
$$
and
$$
\int_{|\la|=1}\Bigl\|\sum_{j=0}^{2N-1} (2N-j)(\la T)^jy_\la\Bigr\|^2\d\la\le
16C^2N^4\int_{|\la|=1}\|y_\la\|^2 \d\la\le
32C^2N^5.
$$
On the other hand,
\begin{align*}
&\int_{|\la|=1}\Bigl\|\sum_{j=0}^{2N-1} (2N-j)(\la T)^jy_\la\Bigr\|^2\d\la=
\int_{|\la|=1}\Bigl\|\sum_{j=0}^{2N-1} (2N-j)(\la T)^j
\sum_{r=0}^{2N-1}\frac{(\la T)^rx}{\|T^rx\|}\Bigr\|^2\d\la
\cr
=&\int_{|\la|=1}\Bigl\|\sum_{j=0}^{4N-2}(\la T)^jx\sum_{r=0}^{\min\{j,2N-1\}}\frac{2N-j+r}{\|T^rx\|}\Bigr\|^2\d\la
=
\sum_{j=0}^{4N-2}\|T^jx\|^2\Bigl(\sum_{r=0}^{\min\{j,2N-1\}}\frac{2N-j+r}{\|T^rx\|}\Bigr)^2
\cr
\ge
&\sum_{j=N-2^K}^{N-1}\|T^jx\|^2\Bigl(\sum_{r=0}^j\frac{N}{\|T^rx\|}\Bigr)^2
\ge
N^2\sum_{j=N-2^K}^{N-1}\|T^jx\|^2\Bigl(\frac{\sqrt{N-2^K}}{4C}\Bigr)^2
\ge
\frac{N^3}{32C^2}\sum_{j=N-2^K}^{N-1}\|T^jx\|^2
\cr
\ge
&\frac{N^3}{32C^2}\|T^Nx\|^2\sum_{k=0}^{K-1}\sum_{j=N-2^{k+1}}^{N-2^k-1}\frac{\|T^jx\|^2}{\|T^Nx\|^2}
=
\frac{N^3}{32C^2}\|T^Nx\|^2\sum_{k=0}^{K-1}\sum_{j=2^{k}+1}^{2^{k+1}}
\frac{\|T^{N-j}x\|^2}{\|T^Nx\|^2}
\cr
\ge
&\frac{N^3}{32C^2}\|T^Nx\|^2\sum_{k=0}^{K-1}\frac{2^{2k}}{16C^2\cdot 2^{2k+2}}=
\frac{N^3}{2^{11}C^4}\|T^Nx\|^2 K.
\end{align*}
Thus we have
$$
\|T^Nx\|^2\le\frac{2^{16}C^6N^2}{K}\le\frac{2^{16}C^6N^2}{\log_2 N-2}.
$$
Hence  $\|T^N\|=O(\frac{N}{\sqrt{\log N}})$.

\end{proof}

In the next diagram we show graphically the implications among various definitions related with
Kreiss boundedness on Hilbert spaces and corresponding known estimates for  the growth of  $\|T^n\|$.
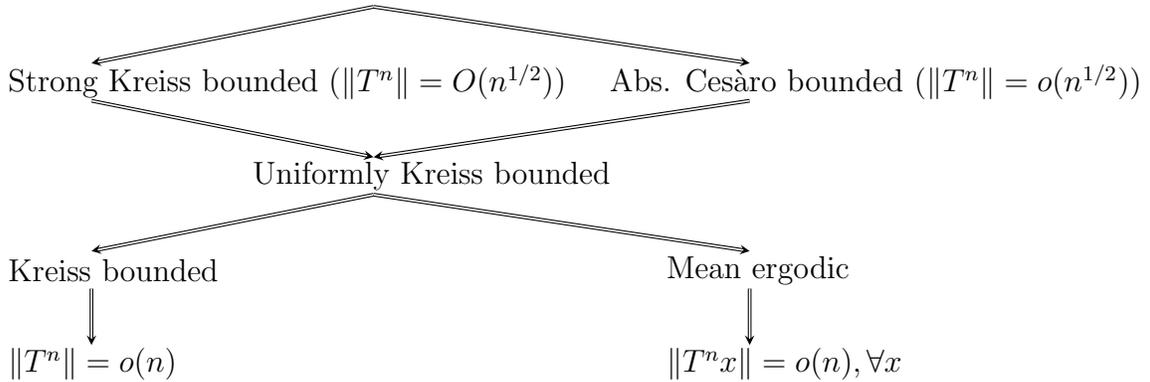
\begin{figure}[h]
\begin{tikzpicture}[scale=0.25,>=stealth]
  \node[right] at (10,10) {Strong Kreiss bounded  \mbox {($\|T^n\|= O(n^{1/2})$)}};
 \node[right] at (42,10) {Abs. Ces\`{a}ro bounded  \mbox {($\|T^n\|= o(n^{1/2})$)}};
  \node[right] at (23,5) {Uniformly Kreiss bounded};
  \node[right] at (10,0) {Kreiss bounded};
  \node[right] at (45,0) {Mean ergodic};
 \node[right] at (10,-5) {\mbox {$\|T^n\|= o(n)$}};
  \node[right] at (45,-5) {\mbox {$\|T^n x\|= o(n), \forall x$}};
  \draw[double, ->] (30,14) -- (15,11);
  \draw[double, ->] (30,14) -- (50,11);
  \draw[double, ->] (15,9) -- (30,6);
  \draw[double, ->] (50,9) -- (30,6);
   \draw[double, ->] (30,4) -- (15,1);
   \draw[double, ->] (30,4) -- (50,1);
    \draw[double, ->] (15,-1) -- (15,-4);
  \draw[double, ->] (50,-1) -- (50,-4);
 \end{tikzpicture}
\caption{Implications among different definitions related with
Kreiss bounded on Hilbert spaces. }
\end{figure}

\begin {example} Derriennic \cite{D} gave an example of a mean ergodic operator $T$ on
a real Hilbert space for which $n^{-1}\|T^n\|$ does not converge to zero, and such that $T^*$
is not mean ergodic,  only weakly mean ergodic (i.e., the Ces\`aro means converge weakly).  In \cite[Example 3.1]{TZ} it was shown that the operator
 \[
{T}:= \left(
\begin{array}{lc}
 B& B-I  \\
0 & B  \\
\end{array}
\right)
\]
acting on the Hilbert spaces $l^2(\mathbb{N}) \oplus l^2(\mathbb{N})$, where  $B$ is the backward shift in  $l^2(\mathbb{N})$, is mean ergodic and $n^{-1}\|{T}^n\|\ge 2$. As a consequence of Theorem \ref{Hilbert}, ${T}$ is an example of a mean ergodic operator acting on a Hilbert spaces, which is not Kreiss bounded.

\end{example}

\bigskip

By \cite[Remark 3.1]{AB}, in Banach spaces there exists a Kreiss bounded operator such that
$$
\displaystyle \lim_{n\to\infty}\|M_{n+1}(T)- M_{n}(T)\|\neq 0.
$$
 However, in Hilbert spaces this is not possible.

\begin{theorem}
If  $T$ is a Kreiss bounded operator on a Hilbert space then
$$
\displaystyle \lim_{n\to\infty}\|M_{n+1}(T)- M_{n}(T)\|=0.
$$
\end{theorem}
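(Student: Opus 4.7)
The plan is to reduce the statement to the norm bound already established in Theorem \ref{Hilbert}. The key algebraic observation is that
$$
M_{n+1}(T) - M_n(T) = \frac{1}{n+2}\sum_{k=0}^{n+1}T^k - \frac{1}{n+1}\sum_{k=0}^{n}T^k = \frac{T^{n+1} - M_n(T)}{n+2},
$$
as one checks by putting both fractions over the common denominator $(n+1)(n+2)$ and simplifying. Thus it suffices to show that both $\|T^{n+1}\|/(n+2)$ and $\|M_n(T)\|/(n+2)$ tend to zero.

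For the first summand, Theorem \ref{Hilbert} directly gives $\|T^{n+1}\|/(n+2) = O(1/\sqrt{\log n}) \to 0$. For the second, I would bound
$$
\|M_n(T)\| \le \frac{1}{n+1}\sum_{k=0}^n \|T^k\|,
$$
and invoke Theorem \ref{Hilbert} again to get $\|T^k\| \le C'k/\sqrt{\log k}$ for $k \ge 2$. A standard comparison with $\int_2^n t/\sqrt{\log t}\,dt \sim n^2/(2\sqrt{\log n})$ then yields $\sum_{k=0}^n \|T^k\| = O(n^2/\sqrt{\log n})$, so $\|M_n(T)\| = O(n/\sqrt{\log n})$, and dividing by $n+2$ produces a sequence that tends to zero as well.

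There is no real obstacle here once Theorem \ref{Hilbert} is available; the proof is essentially an averaging argument. It is however worth emphasising that the bare Kreiss estimate $\|T^n\| = O(n)$, valid on any Banach space, would only yield $\|M_{n+1}(T) - M_n(T)\| = O(1)$, which is not enough. The improvement by the $\sqrt{\log n}$ factor, specific to Hilbert space, is precisely what makes the difference go to zero, matching the failure of the analogous statement in the Banach-space example of \cite{AB}.
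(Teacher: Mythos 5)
Your proof is correct and follows essentially the same route as the paper: the same algebraic identity expressing $M_{n+1}(T)-M_n(T)$ as $\frac{1}{n+2}\bigl(T^{n+1}-M_n(T)\bigr)$, followed by an application of Theorem \ref{Hilbert}. The only (harmless) difference is that for the term $\|M_n(T)\|/(n+2)$ you average the bound $\|T^k\|=O(k/\sqrt{\log k})$ directly, whereas the paper instead cites the sharper estimate $\|M_n(T)\|=O(\log(n+2))$ for Kreiss bounded operators from \cite{SW}; both suffice.
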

\begin{proof}
We have
$$
\frac{n+2}{n+1}M_{n+1}(T)- M_{n}(T) = \frac{1}{n+1} T^{n+1}.
$$
So
 $$
M_{n+1}(T)- M_{n}(T) = \frac{1}{n+1} T^{n+1} - \frac{1}{n+1}M_{n+1}(T).
$$
Now if $T$ is a Kreiss bounded operator on a  Hilbert space, then we have $\|T^n\|=o(n)$ by Theorem \ref{Hilbert}   and $\|M_n(T)\| = O(\log(n+2))$  (see, \cite[Theorem 6.1, 6.2] {SW}). Thus
$$
\displaystyle \lim_{n\to\infty}\|M_{n+1}(T)- M_{n}(T)\|=0.
$$
\end{proof}
\section{On residual spectrum of  Kreiss bounded operators}

The following characterization of ergodic operators was proved in \cite{K}.

\begin{theorem}\cite[Theorem 2.1.3, page 73]{K}
An operator $T$ in a Banach space $X$ is mean ergodic if and only if it is Ces\`aro bounded, $\|T^nx\| = o(n)$ for all $x\in X$ and
$$
X= N(I-T) \oplus\overline{R(I-T)}
$$
\end{theorem}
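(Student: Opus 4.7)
The plan is to prove the equivalence by establishing the forward direction through a natural limit projection, and the reverse direction by checking convergence separately on each summand of the decomposition.

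For the necessity, I would start by defining $Px := \lim_{n\to\infty} M_n(T)x$ for $x\in X$ and deducing Cesàro boundedness as an immediate consequence of the uniform boundedness principle (pointwise convergence of the sequence $(M_n(T))$ of operators implies boundedness of the sequence in norm). Next, to obtain $\|T^nx\|=o(n)$ for all $x$, I would use the identity (\ref{media}) from the excerpt in the form
$$
\frac{T^{n+1}x}{n+1} = M_{n+1}(T)x - \frac{n+1}{n+2}\cdot\frac{n+2}{n+1}M_n(T)x,
$$
or more cleanly $\frac{1}{n+1}T^{n+1}x = \frac{n+2}{n+1}M_{n+1}(T)x - M_n(T)x$, so that both terms converge to $Px$ and the difference tends to $0$. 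This gives $\|T^nx\|/n \to 0$.

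For the decomposition, I would analyze the operator $P$ itself. Using the elementary identity
$$
(I-T)M_n(T) = \frac{I-T^{n+1}}{n+1},
$$
together with Cesàro boundedness and $\|T^{n+1}x\|/(n+1)\to 0$, one gets $(I-T)Px = 0$ for every $x$, so $P(X)\subseteq N(I-T)$. Conversely, if $x\in N(I-T)$ then $M_n(T)x=x$, so $Px=x$; thus $P$ is a projection with range exactly $N(I-T)$. On the other hand, for $x=(I-T)y \in R(I-T)$, the identity $M_n(T)(I-T)y = (y-T^{n+1}y)/(n+1)$ and $\|T^{n+1}y\|/(n+1)\to 0$ give $Px=0$; by Cesàro boundedness and a $3\varepsilon$ argument, $P$ vanishes on $\overline{R(I-T)}$ as well. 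Since $P$ is a bounded projection with range $N(I-T)$ and kernel containing $\overline{R(I-T)}$, the standard direct-sum argument $x=Px+(x-Px)$ gives the decomposition $X=N(I-T)\oplus \overline{R(I-T)}$.

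For the sufficiency, I would verify strong convergence of $M_n(T)$ separately on each summand. On $N(I-T)$ we have $M_n(T)x=x$, which converges trivially. On $R(I-T)$, the same identity $M_n(T)(I-T)y=(y-T^{n+1}y)/(n+1)$ together with the hypothesis $\|T^{n+1}y\|=o(n)$ yields $M_n(T)x\to 0$. To pass from $R(I-T)$ to its closure $\overline{R(I-T)}$ I would use Cesàro boundedness: given $x\in\overline{R(I-T)}$ and $\varepsilon>0$, approximate $x$ by $x'\in R(I-T)$ within $\varepsilon/(2\sup_n\|M_n(T)\|)$, and combine $M_n(T)x'\to 0$ with uniform control to obtain $M_n(T)x\to 0$. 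Combined on $N(I-T)\oplus\overline{R(I-T)}=X$, this gives strong convergence of $M_n(T)$, i.e., mean ergodicity.

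The main conceptual step, and the one requiring the most care, is the identification of the closed subspace $\overline{R(I-T)}$ as exactly the kernel of the would-be projection $P$; specifically, the passage from vanishing on $R(I-T)$ to vanishing on its closure relies crucially on Cesàro boundedness, which is why that hypothesis cannot be dispensed with. The other steps are essentially algebraic manipulations of the averaging identity together with applications of the uniform boundedness principle.
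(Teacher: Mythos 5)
The paper does not prove this statement; it is quoted directly from Krengel's book, so there is no internal proof to compare against. Your argument is the standard Yosida--Kakutani mean ergodic theorem proof, and almost all of it is sound: the uniform boundedness principle for Ces\`aro boundedness, the telescoping identity for $\|T^nx\|=o(n)$, the identity $(I-T)M_n(T)=(I-T^{n+1})/(n+1)$ showing $P(X)=N(I-T)$ and $P=0$ on $\overline{R(I-T)}$, and the approximation argument in the sufficiency direction are all correct.

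There is, however, one genuine gap in the necessity direction. Knowing that $P$ is a bounded projection with range $N(I-T)$ and with kernel \emph{containing} $\overline{R(I-T)}$, the identity $x=Px+(x-Px)$ only yields $X=N(I-T)\oplus N(P)$ together with $\overline{R(I-T)}\subseteq N(P)$; it does not give $X=N(I-T)\oplus\overline{R(I-T)}$, because nothing you wrote shows $N(P)\subseteq\overline{R(I-T)}$, i.e.\ that $x-Px$ actually lies in $\overline{R(I-T)}$. The missing observation is that
$$
x-M_n(T)x=\frac{1}{n+1}\sum_{k=1}^{n}(I-T^k)x=(I-T)\Bigl(\frac{1}{n+1}\sum_{k=1}^{n}\sum_{i=0}^{k-1}T^ix\Bigr)\in R(I-T),
$$
so that letting $n\to\infty$ gives $x-Px\in\overline{R(I-T)}$ for every $x$; this identifies $N(P)$ with $\overline{R(I-T)}$ and completes the decomposition. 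With that one line added, your proof is complete and is exactly the classical argument one finds in Krengel.
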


Recall that the residual spectrum $\sigma_R(T)$ of an operator $T\in B(X)$ is the set of all $\la\in\CC$ such that $T-\la$ is injective and $\overline{R(T-\la)}\ne X$.

\begin{corollary} If $T$ is mean ergodic in a Banach space, then $1\notin \sigma_R(T)$
\end{corollary}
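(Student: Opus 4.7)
The plan is to apply the characterization of mean ergodic operators cited just above (Theorem 2.1.3 of \cite{K}) and derive a direct contradiction with the definition of residual spectrum.

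First, assume for contradiction that $T$ is mean ergodic and $1\in \sigma_R(T)$. By the definition of $\sigma_R(T)$, this means that $I-T$ is injective and $\overline{R(I-T)}\ne X$. Injectivity of $I-T$ says precisely that $N(I-T)=\{0\}$.

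Next, since $T$ is mean ergodic, the cited theorem provides the decomposition
$$
X=N(I-T)\oplus \overline{R(I-T)}.
$$
Substituting $N(I-T)=\{0\}$ into this decomposition yields $X=\overline{R(I-T)}$, directly contradicting $\overline{R(I-T)}\ne X$. Hence $1\notin \sigma_R(T)$, as desired.

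There is no real obstacle here; the entire content is the observation that the residual-spectrum condition is incompatible with the mean ergodic decomposition once $N(I-T)$ is trivial. The only point to be a bit careful about is to separate the two requirements built into $1\in\sigma_R(T)$ (injectivity \emph{and} non-dense range) and apply each at the correct step.
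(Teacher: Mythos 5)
Your argument is correct and is exactly the one the paper intends: the corollary is stated without proof as an immediate consequence of the cited decomposition $X=N(I-T)\oplus\overline{R(I-T)}$, and your observation that injectivity forces $X=\overline{R(I-T)}$ is precisely the implicit reasoning. Nothing is missing.
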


\begin{corollary}\cite[Corollary 2.5]{BermBMP}, \cite[Corollary 2.7]{BermBMP}
 Let $T$ be a  bounded operator on a Banach  space $X$. If
\begin{enumerate}
\item either $T$ is a uniformly Kreiss bounded  operator on a Hilbert space,

\item or $T$ is an absolutely Ces\`aro  bounded  operator on a reflexive Banach  space,

\end{enumerate}
then $\lambda T$ is mean ergodic for all $\la, |\la|=1$. Consequently, $\sigma_R(T)\subset{\mathbb D}$.
\end{corollary}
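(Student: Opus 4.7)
The plan is to apply the characterization of mean ergodicity given in Theorem 2.1.3 of~\cite{K}: for every $\lambda$ with $|\lambda|=1$ we verify (a) that $\lambda T$ is Ces\`aro bounded, (b) that $\|(\lambda T)^nx\|/n\to 0$ for every $x\in X$, and (c) the splitting $X=N(I-\lambda T)\oplus\overline{R(I-\lambda T)}$. Once mean ergodicity of every $\lambda T$ with $|\lambda|=1$ is established, the consequence $\sigma_R(T)\subset\mathbb D$ is automatic: the factorization $\lambda T-I=\lambda(T-\lambda^{-1})$ shows $1\in\sigma_R(\lambda T)$ iff $\lambda^{-1}\in\sigma_R(T)$, the preceding Corollary rules $1$ out of the residual spectrum of any mean ergodic operator, and Kreiss boundedness (implied by both hypotheses) forces $\sigma(T)\subset\overline{\mathbb D}$, whence $\sigma_R(T)\subset\overline{\mathbb D}\setminus\mathbb T=\mathbb D$.

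Condition (a) is immediate in both cases: under~(1) the equivalence~(\ref{UKB}) gives $\|M_n(\lambda T)\|\le C$, and under~(2) the triangle inequality combined with absolute Ces\`aro boundedness yields $\|M_n(\lambda T)x\|\le\frac{1}{n+1}\sum_{j=0}^n\|T^jx\|\le C\|x\|$. Condition~(b) in case~(1) follows from Theorem~\ref{Hilbert}, which gives $\|T^n\|=O(n/\sqrt{\log n})=o(n)$, hence $\|(\lambda T)^nx\|/n\le\|T^n\|\,\|x\|/n\to 0$. With (a) and~(b) in hand, condition~(c) follows from reflexivity of $X$ by a standard Eberlein-type argument: the bounded sequence $M_n(\lambda T)x$ has a weak cluster point $y$; the identity $(I-\lambda T)M_n(\lambda T)x=(x-(\lambda T)^{n+1}x)/(n+1)$ tends to zero in norm by~(b), and weak continuity of $\lambda T$ then forces $y\in N(I-\lambda T)$; since $x-M_n(\lambda T)x\in R(I-\lambda T)$ for every $n$, Mazur's theorem places $x-y\in\overline{R(I-\lambda T)}$; directness of the sum is ensured because $M_n(\lambda T)$ is the identity on $N(I-\lambda T)$ and converges to zero on $\overline{R(I-\lambda T)}$ by~(b).

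The main obstacle is condition~(b) in case~(2): absolute Ces\`aro boundedness yields only $\|T^nx\|\le C(n+1)\|x\|$, so the quotient $\|T^nx\|/n$ is merely bounded and not obviously null. I would attack it by applying the defining inequality to each shifted vector $T^kx$, producing $\sum_{j=0}^M\|T^{j+k}x\|\le C(M+1)\|T^kx\|$ for every $k$, and combining these iterated estimates with reflexivity: a hypothetical nonzero weak cluster point $z$ of $T^nx/n$ would be a fixed vector of $T$, and the shifted Ces\`aro bounds along its orbit can be shown to contradict any lower bound of the form $\|T^{n_k}x\|\ge\delta n_k$ along the subsequence producing~$z$. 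This is in substance the route taken in~\cite[Corollary~2.7]{BermBMP}.
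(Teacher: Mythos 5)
The paper offers no proof of this corollary --- it is quoted from \cite{BermBMP} --- so your argument has to stand on its own. Most of it does. Case (1) is complete: Ces\`aro boundedness of $\lambda T$ from (\ref{UKB}), the norm estimate $\|T^n\|=o(n)$ from Theorem \ref{Hilbert}, and the Eberlein--Lorch argument in a reflexive space giving the splitting and hence mean ergodicity. The deduction of $\sigma_R(T)\subset\mathbb{D}$ from $\lambda T-I=\lambda(T-\lambda^{-1}I)$, the preceding corollary, and $r(T)\le 1$ is also correct. (One cosmetic slip: $M_n(\lambda T)\to 0$ on $\overline{R(I-\lambda T)}$ follows from convergence on $R(I-\lambda T)$ plus the uniform bound of (a) and density, not from (b) alone.)

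The genuine gap is exactly where you flag it: condition (b) in case (2), and the route you sketch cannot close it. Krengel's criterion as you invoke it requires $\|T^nx\|/n\to 0$ \emph{in norm}, and a weak-cluster-point argument cannot deliver a norm estimate. Concretely: a lower bound $\|T^{n_k}x\|\ge\delta n_k$ is entirely compatible with $T^{n_k}x/n_k\rightharpoonup 0$ weakly (normalized weakly null sequences exist in any infinite-dimensional reflexive space), so your ``hypothetical nonzero weak cluster point'' need not exist and no contradiction is reached. Moreover, even if you proved that every weak cluster point of $T^nx/n$ vanishes, weak nullity is strictly weaker than what you need: Derriennic's example, recalled in the paper, shows that the Ces\`aro means can converge weakly without converging in norm, and the norm convergence of $M_n(\lambda T)$ on $\overline{R(I-\lambda T)}$ in your own Eberlein argument uses $\|T^{n}w\|/n\to 0$ in norm. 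Finally, the assertion that a weak cluster point of $T^nx/n$ is a fixed vector of $T$ is unsupported: $(I-T)(T^nx/n)=(T^nx-T^{n+1}x)/n$ has no visible reason to tend to zero, even weakly. The proof in \cite{BermBMP} (see also \cite{AB}) is quantitative and does not use reflexivity at this step: iterating the inequality $\sum_{j=0}^{N}\|T^{j+k}x\|\le C(N+1)\|T^kx\|$ yields a uniform bound of the form $\|T^n\|=O(n^{1-\varepsilon})$, in particular $\|T^n\|=o(n)$, and reflexivity enters only afterwards for the splitting. Until you supply such a norm estimate, case (2) is not proved.
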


The next result generalizes the above observation as well as the results of \cite{MK}, \cite{GZ08}  for power bounded operators.

\begin{theorem}\label{residualspectrum}
If $T$ is a Kreiss bounded operator on a reflexive Banach space then $\sigma_R(T)\subset D$.
\end{theorem}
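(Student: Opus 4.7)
The plan is to argue by contradiction, adapting the mean ergodic argument that gives the analogous statement for power bounded operators on reflexive spaces. Kreiss boundedness forces $\sigma(T)\subset\overline{\mathbb D}$, so it suffices to exclude $\lambda_0\in\sigma_R(T)$ with $|\lambda_0|=1$. Since $\bar\lambda_0 T$ is again Kreiss bounded and $1\in\sigma_R(\bar\lambda_0 T)$, I would replace $T$ by $\bar\lambda_0 T$ and reduce to $\lambda_0=1$. Then $I-T$ is injective with non-dense range, so Hahn--Banach produces a nonzero $x^*\in X^*$ vanishing on $R(I-T)$, i.e.\ $T^*x^*=x^*$, and I would fix $x\in X$ with $x^*(x)=1$.

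The key input is property (\ref{KB2}) from the Remarks: $\sup_n\|M_n^{(2)}(T)\|\le C$. Hence $(M_n^{(2)}(T)x)_n$ is a bounded sequence in the reflexive space $X$, and after extracting a subsequence it converges weakly to some $y\in X$. Using $T^*x^*=x^*$, one has $x^*(T^jx)=x^*(x)=1$ for every $j$, and so
$$
x^*\bigl(M_n^{(2)}(T)x\bigr)=\frac{2}{(n+1)(n+2)}\sum_{j=0}^n(n+1-j)=1
$$
for every $n$. Passing to the weak limit gives $x^*(y)=1$, in particular $y\neq 0$.

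It remains to show $Ty=y$. A short telescoping computation from the formula $M_n^{(2)}(T)=\frac{2}{(n+1)(n+2)}\sum_{j=0}^n(n+1-j)T^j$ yields the operator identity
$$
(T-I)M_n^{(2)}(T)=\frac{2}{n+1}\bigl(M_{n+1}(T)-I\bigr).
$$
Because $T$ is Kreiss bounded on a Banach space, $\|M_n(T)\|=O(\log n)$ by \cite[Theorems 6.1, 6.2]{SW}, so $\|(T-I)M_n^{(2)}(T)x\|=O(\log n/n)\to 0$ in norm. Combined with the weak convergence of $M_{n_k}^{(2)}(T)x$ to $y$ and the weak continuity of $T-I$, this forces $(T-I)y=0$, contradicting injectivity of $I-T$.

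The main subtlety is that, unlike in the power bounded situation, the first Cesàro averages $M_n(T)$ need not be uniformly bounded, so the ``naive'' mean ergodic argument is not available; one has to run the argument with the second Cesàro averages (whose uniform boundedness is exactly the Kreiss condition (\ref{KB2})), and then use the logarithmic bound $\|M_n(T)\|=O(\log n)$ to conclude that $(T-I)M_n^{(2)}(T)x\to 0$. Reflexivity is used exactly once, to extract the weak limit $y$.
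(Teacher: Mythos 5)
Your proof is correct. All the individual steps check out: the reduction to $\lambda_0=1$ via rotation invariance of the Kreiss condition, the functional $x^*$ with $T^*x^*=x^*$ obtained from non-density of $R(I-T)$, the computation $x^*\bigl(M_n^{(2)}(T)x\bigr)=1$, and in particular the telescoping identity $(T-I)M_n^{(2)}(T)=\frac{2}{n+1}\bigl(M_{n+1}(T)-I\bigr)$, which I verified directly. Combined with $\|M_n(T)\|=O(\log n)$ from \cite{SW} (only $o(n)$ is actually needed), weak sequential compactness of bounded sets in a reflexive space, and weak continuity of $T-I$, you obtain a nonzero fixed vector of $T$, contradicting injectivity of $I-T$.

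The route differs from the paper's in an instructive way. The paper cites Suciu's theorem \cite[Theorem 2.1]{Su3}: on a reflexive space, the uniform boundedness of $M_n^{(2)}(\lambda T)$ together with the growth control on $M_n(\lambda T)$ yields strong convergence of the second Ces\`aro means and the full ergodic decomposition $X=N(\bar\lambda I-T)\oplus\overline{R(\bar\lambda I-T)}$ for every $|\lambda|=1$, from which the conclusion is immediate. You instead prove directly, by contradiction, only the fragment of that decomposition that is needed (namely that $R(I-T)$ non-dense forces $N(I-T)\neq\{0\}$), replacing the black-box citation with an explicit weak-compactness argument. The two proofs rest on exactly the same analytic inputs from \cite{SW} --- the uniform bound on $M_n^{(2)}(\lambda T)$ characterizing Kreiss boundedness and the logarithmic bound on $M_n(\lambda T)$ --- so yours buys self-containedness at the cost of a slightly longer argument, while the paper's buys brevity and, via the full decomposition, a marginally stronger intermediate statement. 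Your closing remark correctly identifies why the na\"{\i}ve power bounded argument fails and where each hypothesis enters.
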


\begin{proof}
If $T$ is a Kreiss bounded operator on a Banach space $X$, then $\|M_n^{(2)}(\lambda T)\|$ are uniformly bounded and $\|M_n(\lambda T)\| = O(log(n+2))$ for all $ \lambda \in \mathbb{T}$ (see, \cite[Theorem 6.1, 6.2] {SW}).

Now, since $X$ is a reflexive Banach space, $ M_n^{(2)}(\lambda T)$ converge strogly in $X = N(I-\lambda T) \oplus \overline{R(I-\lambda T)}$ (see, \cite[Theorem 2.1]{Su3}).

So for all $ \lambda \in \mathbb{T}$, we have $X = N(\overline{\lambda}I- T)\oplus \overline{R(\overline{\lambda}I-T)}$. Hence if $\overline{\lambda} \notin \sigma_p(T)$, then $X =  \overline{R(\overline{\lambda}I-T)}$ and
$\overline{\lambda} \notin \sigma_R(T)$.

Thus $ \sigma_R(T)\subset D$.

\end{proof}

The condition on the residual spectrum is optimal. The forward shift in $l^{2}(\mathbb{N})$ is a  power bounded operator with  residual spectrum equal to the open unit disc.

\begin{example}
There exists  a power  bounded operator $T$ on $c_0(\mathbb{N})$ such that $1\in \sigma_R(T)$.
\end{example}
\begin{proof}
The operator $T: c_0(\mathbb{N})\rightarrow c_0(\mathbb{N})$ defined by
$$
T(a_1 , a_2 , a_3 , \cdots )=(a_1, a_1 , a_2 , a_3 , \cdots )
$$
is power-bounded and $1\in \sigma_R(T)$.
\end{proof}

\begin{example}
There exists  a Kreiss bounded operator  $T$ on a non-reflexive Banach space, which is not power bounded and  $1\in \sigma_R(T)$.
\end{example}
\begin{proof}
Let $X$ denote the Banach space of analytic functions $f$ in the open unit disc and continuous on the boundary, such that $f'$ belongs to the Hardy space $H^1$, equipped with the norm
$$
\|f\|:= \|f\|_{\infty}+ \|f'\|_{1}
$$
If $M_z$ denotes the multiplication operator and $T= \frac{1}{2}(I+M_z)$, then $T$ is a Kreiss bounded operator, see \cite[Example 4]{Ne}.

Moreover, $N(I-T)=\{0\}$ and $R(I-T)$ is not dense because every function in this closure necessarily verifies that $f(1)=0$. Thus  $1\in \sigma_R(T)$.
 \end{proof}

\begin{proposition}\label{ergces}
There exists a Ces\`aro bounded operator $T$ on a Hilbert space  which is mean ergodic,
$N(T+I)\ne\{0\}$ and $N(T^*+I)=\{0\}$.
\end{proposition}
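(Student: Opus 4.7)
The plan is to write down a single concrete operator on $\ell^2(\NN)$ realising all four required properties. Let $(e_n)_{n\ge 0}$ denote the canonical basis, let $B$ be the backward shift $Be_0=0$, $Be_k=e_{k-1}$ for $k\ge 1$, and let $P_0$ be the orthogonal projection onto $\CC e_0$. I would take $T=B-P_0$, i.e.\ $Te_0=-e_0$ and $Te_k=e_{k-1}$ for $k\ge 1$, which is manifestly a bounded operator on the Hilbert space $\ell^2(\NN)$.

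First I would dispose of the two spectral conditions by direct computation. Writing $x=\sum_k a_k e_k$, one obtains $(T+I)x = a_1 e_0 + \sum_{m\ge 1}(a_m+a_{m+1})e_m$, so the recursion $a_1=0$, $a_{m+1}=-a_m$ forces $a_k=0$ for all $k\ge 1$, and hence $N(T+I)=\CC e_0\ne\{0\}$. A short calculation gives $T^*e_0=-e_0+e_1$ and $T^*e_j=e_{j+1}$ for $j\ge 1$, so $(T^*+I)y=0$ reduces to $y_k=-y_{k-1}$ for all $k\ge 1$, i.e.\ $y_k=(-1)^k y_0$; this sequence lies in $\ell^2(\NN)$ only when $y_0=0$, so $N(T^*+I)=\{0\}$.

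The key analytic step is to establish Cesàro boundedness. I would prove by induction the explicit formula
\[
T^n e_k=\begin{cases}e_{k-n}& \text{if } k\ge n,\\ (-1)^{n-k}e_0& \text{if } k\le n,\end{cases}
\]
and apply it to $x=\sum_k a_k e_k$ to get $\sum_{j=0}^n T^j x = C_0 e_0 + \sum_{m\ge 1} C_m e_m$, where
\[
C_0=\sum_{\substack{0\le k\le n\\ n-k\text{ even}}} a_k,\qquad C_m=\sum_{j=0}^n a_{m+j}.
\]
Cauchy--Schwarz bounds $|C_0|^2$ by $\tfrac{n+1}{2}\|x\|^2$, and Young's convolution inequality applied to the cross-correlation of $(a_k)$ with $\mathbf 1_{\{0,\ldots,n\}}$ bounds $\sum_{m\ge 1}|C_m|^2$ by $(n+1)^2\|x\|^2$. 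Combining, $\|\sum_{j=0}^n T^j x\|\le\sqrt{2}(n+1)\|x\|$, so $\|M_n(T)\|\le\sqrt{2}$ for every $n$. Mean ergodicity then follows by a standard $\varepsilon/3$ argument: for a finitely supported $x$ the explicit formula shows $M_n(T)x\to 0$ in norm (the $e_0$-coefficient is $O(1/(n+1))$ and all higher coefficients vanish once $n$ exceeds the support of $x$), and density of finitely supported vectors together with the Cesàro bound extends strong convergence to every $x\in\ell^2(\NN)$, with limit $0$.

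The main obstacle in the plan is the norm estimate for $\sum_{j=0}^n T^j x$. On the unit vector $u=\frac{1}{\sqrt{n+1}}\sum_{k=0}^n(-1)^k e_k$ one has $T^n u=(-1)^n\sqrt{n+1}\,e_0$, so $\|T^n\|\ge\sqrt{n+1}$: the operator is very far from power bounded, and a term-by-term triangle inequality on $\sum_{j=0}^n\|T^j x\|$ would give only an $O(n^{3/2})$ bound. One must instead exploit the cancellation of signs producing $C_0$ separately from the shift-type mixing producing $(C_m)_{m\ge 1}$, and verify that both contributions are absorbed after dividing by $n+1$.
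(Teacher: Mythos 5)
Your construction is correct, but it is genuinely different from the one in the paper. The paper takes $T$ to be (the negative of) a rank-one perturbation of a diagonal contraction: an upper-triangular matrix with diagonal $-1,-c_1,-c_2,\dots$ (where $c_j=1-4^{-j}$) and first row $-(1,\e_1,\e_2,\dots)$ with $\e_j=2^{-j}$; there the kernel conditions, the bound $\|M_{2k}(T)\|\le 3/2$ and the fact $n^{-1}\|T^n\|\to0$ are all read off from the explicit entries of $T^n$, which are geometric sums $\e_j(1+c_j+\cdots+c_j^{n-1})$. Your operator $B-P_0$ is instead a rank-one perturbation of the backward shift, and the Ces\`aro bound comes from splitting $\sum_{j=0}^nT^jx$ into the alternating-sign mass $C_0$ absorbed at $e_0$ (Cauchy--Schwarz, $O(\sqrt{n})\|x\|$) and the windowed sums $C_m=\sum_{j=0}^na_{m+j}$ (Young's inequality, $O(n)\|x\|$); I checked the formula for $T^ne_k$ and the identification of $C_0$ and $C_m$, and the resulting bound $\|M_n(T)\|\le\sqrt2$ is valid, as are both kernel computations. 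Your argument is arguably more elementary and self-contained (one convolution estimate replaces the entrywise bookkeeping), while the paper's diagonal model makes the asymptotics of the individual matrix entries of $T^n$ and $M_n(T)$ completely transparent. Two harmless slips: the number of $k\in\{0,\dots,n\}$ with $n-k$ even is $\lceil(n+1)/2\rceil$, not $(n+1)/2$ (immaterial for the final bound); and for finitely supported $x$ the coefficients $C_m$ with $1\le m\le K$ do \emph{not} vanish for large $n$ --- they stabilize at $\sum_{k=m}^Ka_k$ --- but since there are finitely many of them and each is bounded, dividing by $n+1$ still gives $M_n(T)x\to0$, so the mean ergodicity conclusion stands.
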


\begin{proof}
Let $H$ be the Hilbert space with an orthonormal basis $e_j \quad(j=0,1,\dots)$.
Let $\e_j=2^{-j}, c_j=1-\e_j^2\quad(j\ge 1)$.
Let
\[
T=-\left(
\begin{array}{lcccr}
1 & \e_{1} & \e_{2}&\e_3& \cdots \\
0 & c_{1} & 0& 0 & \cdots \\
0 & 0 & c_{2} & 0 &\cdots \\
0 & 0 & 0 & c_{3} & \cdots \\
&&\cdots&&
\end{array}
\right)
\]

Clearly $Te_0=-e_0$, so $N(T+I)\ne\{0\}$. We have $(T+I)e_j=(1-c_j)e_j-\e_je_0=\e_j^2e_j-\e_j e_0$. So
$(T+I)(-\e_j^{-1}e_j) =e_0-\e_je_j\to e_0$.
Thus $e_0\in\overline{R(T+I)}$ and it is easy to see that $\overline{R(T+I)}=H$.
Hence $N(I+T^*)=\{0\}$.

For $n\ge 1$ we have
$$
((T)^n)_{j,j}=(-c_j)^n\qquad(j\ge 0),
$$
$$
((T)^n)_{0,j}=(-1)^n \e_j(1+c_j+c_j^2+c_j^{n-1})=
(-1)^n\e_j\frac{1-c_j^n}{1-c_j}\qquad(j\ge 1)
$$
and $((T)^n)_{i,j}=0$ otherwise.

So
\[
T^n=(-1)^n\left(
\begin{array}{lccr}
1 & \e_{1}(1+c_1+\cdots+c_1^{n-1}) & \e_{2}(1+\cdots+c_2^{n-1})& \cdots \\
0 & c_{1}^n & 0& \cdots \\
0 & 0 & c_{2}^n & \cdots \\
&\cdots&& \cdots
\end{array}
\right)
\]
We show that $n^{-1}\|T^n\|\to 0$.
Let $\delta>0$. Find $j_0$ such that $\e_{j_0}<\delta$ and $n_0$ satisfying
$c_{j_0}^{n_0\delta}<\delta$ and $2n_0^{-1}<\delta$.

Let $n\ge n_0$. Clearly $|(T^n)_{j,j}|\le 1$ for all $j\ge 0$.

If $j\ge j_0$ then
$$
|(T^n)_{0,j}|=\e_j(1+c_j+\cdots+c_j^{n-1})\le\e_j n=2^{-j}n<\frac{\delta n}{2^{j-j_0}}.
$$
If $1\le j<j_0$ then
$$
|(T^n)_{0,j}|=\e_j(1+c_j+\cdots+c_j^{n-1})=
\e_j\sum_{0\le i<n\delta}c_j^i+\e_j\sum_{n\delta\le i\le n-1}c_j^i\le
\e_j(n\delta+1)+\e_j n\delta.
$$
Hence
\begin{align*}
\|T^n\|&\le \sup_j |(T^n)_{j,j}|+\sum_{j=1}^\infty |(T^n)_{0,j}|
\cr
&\le
1+\delta n\sum_{j=j_0}^\infty 2^{-j+j_0}+\sum_{j=1}^{j_0-1}\bigl(\e_j(n\delta+1)+\e_j n\delta\bigr)\le
1+2\delta n+\sum_{j=1}^{j_0-1} 2^{-j}(2n\delta+1)\le 5\delta n.
\end{align*}
Hence $n^{-1}\|T^n\|\to 0$.

We show that $T$ is Ces\`aro bounded.

Let $M_{2k}(T)=(2k+1)^{-1}(I+T+T^2+\cdots+T^{2k})$. It is easy to see that
$$
|(M_{2k}(T))_{j,j}|\le 1.
$$
For each $k\ge 1$ we have
\begin{align*}
&\bigr|(M_{2k}(T))_{0,j}\bigl|=
\frac{\e_j}{(2k+1)(1-c_j)}\Bigl|-(1-c_j)+(1-c_j^2)-\cdots+(1-c_j^{2k})\Bigr|
\cr
=&\frac{\e_j}{(2k+1)(1-c_j)}\bigl|c_j-c_j^2+\cdots-c_j^{2k}\bigr|=
\frac{\e_j}{2k+1}\bigl(c_j+c_j^3+\cdots+c_j^{2k-1}\bigr)\le\frac{\e_j}{2}.
\end{align*}
So
$$
\|M_{2k}(T)\|\le 1+\sum_{j=1}^\infty\frac{\e_j}{2}=\frac{3}{2}.
$$
Since
$$
M_{2k+1}(T)=\frac{2k+1}{2k+2}M_{2k}(T)+\frac{T^{2k+1}}{2k+2},
$$
$T$ is Ces\`{a}ro bounded. Moreover,  as $n^{-1}\|T^n\|\to 0$,  $T$ is  mean  ergodic.
\end{proof}

\begin{corollary}\label{meanergodic} There exists a mean ergodic  operator $T$ on a Hilbert space such that  $\sigma_R(T)\cap \partial \mathbb {D} \ne \emptyset$.
\end{corollary}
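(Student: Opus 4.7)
The plan is to take the mean ergodic operator in the corollary to be the Hilbert space adjoint $S:=T^*$, where $T$ is the operator constructed in Proposition \ref{ergces}, and then to check that $-1\in\sigma_R(S)$. Both ingredients rest on standard Hilbert space dualities between an operator and its adjoint, so there is no genuine obstacle once Proposition \ref{ergces} is in hand.

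First I would verify that $S=T^*$ is still mean ergodic. Since $T$ is Cesàro bounded on a Hilbert space, $M_n(S)=M_n(T)^*$ has the same norm as $M_n(T)$ and is therefore uniformly bounded. The proof of Proposition \ref{ergces} establishes that $n^{-1}\|T^n\|\to 0$, and passing to adjoints gives $n^{-1}\|S^n\|\to 0$, so in particular $n^{-1}\|S^nx\|\to 0$ for every $x\in H$. The mean ergodic theorem in reflexive Banach spaces (Cesàro boundedness plus $n^{-1}\|S^nx\|\to 0$ for all $x$) then yields the mean ergodicity of $S$.

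Next I would identify $-1$ as an element of $\sigma_R(S)$. By definition, $\lambda\in\sigma_R(S)$ means that $S-\lambda$ is injective while $\overline{R(S-\lambda)}\ne H$, and in a Hilbert space the second condition is equivalent to $N((S-\lambda)^*)\ne\{0\}$. Taking $\lambda=-1$ and $S=T^*$, Proposition \ref{ergces} gives $N(S+I)=N(T^*+I)=\{0\}$, so $S+I$ is injective, and simultaneously $N((S+I)^*)=N(T+I)\ne\{0\}$, so $\overline{R(S+I)}\ne H$. Consequently $-1\in\sigma_R(S)\cap\partial\mathbb{D}$, which finishes the proof. The only step with any content beyond invoking the proposition is the transfer of mean ergodicity to the adjoint, and this reduces to the elementary observation above; no real obstacle is expected.
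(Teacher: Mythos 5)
Your proof is correct and follows the route the paper intends (the paper leaves the corollary's proof implicit after Proposition \ref{ergces}): the witness must indeed be $T^*$ rather than $T$, since $-1\in\sigma_p(T)$, and your verification that $T^*$ is mean ergodic (Cesàro boundedness plus $n^{-1}\|T^{*n}\|\to 0$ on a reflexive space) and that $N(T^*+I)=\{0\}$ while $N(T+I)\ne\{0\}$ forces $-1\in\sigma_R(T^*)$ is exactly what is needed. No gaps.
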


\begin{example}
By Theorem \ref {residualspectrum}, the operator of Corollary \ref{meanergodic} is another example of a mean ergodic operator on a Hilbert space, which is not Kreiss bounded.
\end{example}

\section{Questions}

As consequence of results of this paper we gave two examples in Hilbert spaces of mean ergodic operators which are not Kreiss bounded.
By \cite[Corollary 2.5]{BermBMP}, all uniformly Kreiss bounded operators on Hilbert spaces are mean ergodic.
 However, the following problem is open:
 \begin{question}
 Does there exist a Kreiss bounded operator on a Hilbert space which is not mean ergodic?
\end{question}

Observe that by Theorem \ref{Hilbert}, the above problem is equivalent to the question whether there exists a Kreiss bounded Hilbert space operator which is not Ces\`aro bounded.

\bigskip

If the answer of the above question is positive then such an operator is not uniformly Kreiss bounded. If the answer is negative then it is natural  to ask
\begin{question}
Does there exist a Kreiss bounded Hilbert space operator which is not uniformly Kreiss bounded?
\end{question}

Another open question is whether it is possible to generalize the Jacobs-de Leeuw-Glicksberg theorem for uniformly Kreiss bounded operators.

\begin{question}
Let $X$ be a reflexive Banach space and $T\in B(X)$ a uniformly Kreiss bounded operator such that $n^{-1}T^nx\to 0$ for all $x\in X$. Is it true that $X$ can be decomposed as
$$
X=\bigvee_{|\lambda|=1}N(T-\la) \oplus \bigcap_{|\la|=1}\overline{R(T-\la)}\ \ ?
$$
\end{question}

Clearly for Hilbert space operators the condition $n^{-1}T^nx\to 0$ is satisfied automatically.

It is an open question due to Aleman and Suciu \cite{AS}, p.279 whether each uniformly Kreiss bounded operator $T$ on a Banach space $X$ satisfies the condition $\|n^{-1}T^n\|\to 0$.

\small

\smallskip
\noindent A. Bonilla\\
\textit{  Departamento de An\'alisis Matem\'atico, Universidad de la Laguna,
38271, La Laguna \\(Tenerife), Spain.}\\
\textit{  e-mail:} abonilla@ull.es

\smallskip
\noindent V. M\"uller\\
Mathematical Institute, Czech Academy of Sciences, Zitn\'a 25,
115 67 Prague 1, Czech Republic.\\
{\it e-mail}: muller@math.cas.cz


\begin{thebibliography}{99}

\bibitem{AB}L. Abadias and A. Bonilla,
Growth orders and ergodicity for absolutely Cesàro bounded operators,
Linear Algebra and its Applications, \textbf{561}(2019), 253-267

\bibitem{AS} A. Aleman and L. Suciu,
On ergodic operator means in Banach spaces,
Integral Equations Operator Theory, \textbf{85} (2016), 259-287.

\bibitem{BermBMP} T. Berm\'{u}dez, A. Bonilla, V. M\"uller and A. Peris,
 Cesaro bounded  operators in Banach spaces, J. d'Analyse Math., to appear.

\bibitem{D} Y. Derriennic,
 On the mean ergodic theorem for Ces\`{a}ro bounded operators,
 Colloq. Math., \textbf{84/85} (2000), part 2, 443–455.

 \bibitem{Ghara} S. Ghara, The orbit of a bounded operator under the Möbius group modulo similarity equivalence, ArXiv1811.05428v1

\bibitem {GZ08} A. Gomilko and J.  Zem\'{a}nek,
 On the uniform Kreiss resolvent condition,
 (Russian) Funktsional. Anal. i Prilozhen., \textbf{42} (2008), no. 3, 81--84; translation in Funct. Anal. Appl. \textbf{42} (2008), no. 3, 230-233.

\bibitem{HT} M. Haase and Y. Tomilov,  Domain characterizations of certain functions of power-bounded operators, Studia Math, 196 (2010), no. 3, 265–288.

\bibitem{Hille} E. Hille,
 Remarks on ergodic theorems,
 Trans. Amer. Math. Soc., \textbf{57}, (1945). 246–269.

 \bibitem{Kornfeld} I.  Kornfeld and W. Kosek,
 Positive $L^1$ operators associated with nonsingular mappings and an example of E. Hille,
 Colloq. Math., \textbf{98} (2003), no. 1, 63–77.

 \bibitem{Kosek} W. Kosek,
 Example of a mean ergodic $L^1$ operator with the linear rate of growth,
 Colloq. Math. \textbf{124} (2011), no. 1, 15–22.

\bibitem{K}  U. Krengel,
 Ergodic theorems.  De Gruyter Studies in Mathematics, 6. Walter de Gruyter and Co., Berlin, 1985.

\bibitem{LN}  C. Lubich and O. Nevanlinna,
On resolvent conditions and stability estimates,
 BIT, \textbf{31} (1991), no. 2, 293–313.

 \bibitem{MK} A. Mello and C.S.  Kubrusly, Residual spectrum of power bounded operators. Funct. Anal. Approx. Comput. 10 (2018), no. 3, 21–26.

\bibitem{Mc} C.A. McCarthy,
A strong resolvent condition does not imply power-boundedness,
Chalmers Institute of Technology and the University of G\"{o}teborg, Preprint No 15 (1971).

\bibitem{MSZ} A. Montes-Rodr{\'{i}}guez, J. S\'anchez-\'Alvarez and J. Zem\'anek,
 Uniform Abel-Kreiss boundedness and the extremal behavior of the Volterra operator,
Proc. London Math. Soc.,  \textbf{ 91} (2005),761-788.

\bibitem {Ne}  O. Nevanlinna,
 Resolvent conditions and powers of operators,
  Studia Math., \textbf{145} (2001), no. 2, 113–134.

\bibitem{Shi74} A. L. Shields,
 Weighted shift operators and analytic function theory,
 Topics in Operator Theory,Math. Surveys Monographs, vol. 13, Amer. Math. Soc, Providence, RI, 1974, pp. 49-128.

\bibitem{Shi} A. L. Shields,
 On M\"{o}bius Bounded operators,
Acta Sci. Math., (Szeged),  \textbf{ 40} (1978), 371-374.

\bibitem{STW} M. N. Spijker, S. Tracogna and B. D. Welfert,
About the sharpness of the stability estimates in the Kreiss matrix theorem,
 Math. Comp., \textbf{72} (242)(2003), 697-713.

\bibitem{SW}J. C. Strikwerda and B. A. Wade,
A survey of the Kreiss matrix theorem for power bounded families of matrices and its extensions,
 Linear operators (Warsaw, 1994), 339-360, Banach Center Publ., \textbf{38}, Polish Acad. Sci., Warsaw, 1997.

\bibitem{Su3} L. Suciu,  Saturation for Ces\`{a}ro means of higher order. Oper. Matrices \textbf{7} (2013), no. 3, 557–572.

\bibitem{SZ} L. Suciu and J. Zem\'anek,
Growth conditions on Ces\`{a}ro means of higher order,
Acta Sci. Math (Szeged),  \textbf{ 79} (2013), 545-581.

\bibitem{TZ} Y. Tomilov and J. Zem\'{a}nek,
A new way of constructing examples in operator ergodic theory,
 Math. Proc. Cambridge Philos. Soc., \textbf{137} (2004), no. 1, 209-225.


\end{thebibliography}
\end{document}